\def \eps {\varepsilon}
\DeclareMathOperator{\Ric}{Ric}
\DeclareMathOperator{\Hess}{Hess}
\DeclareMathOperator{\vol}{vol}
\DeclareMathOperator{\diver}{div}
\newcommand{\oo}[1]{\overline{#1}}
\title{Zhong-Yang type eigenvalue estimate with integral curvature condition}
\author{Xavier Ramos Oliv\'e}
\address{Department of Mathematics\\
University of California\\
Riverside, CA 92521}
\email{\href{mailto:xramo002@ucr.edu}{xramo002@ucr.edu}}
\author{Shoo Seto}
\address{Department of Mathematics\\
University of California\\
Irvine, CA 92617}
\email{\href{mailto:shoos@uci.edu}{shoos@uci.edu}}
\author{Guofang Wei}
\address{Department of Mathematics\\
         University of California\\
         Santa Barbara, CA 93106}
     \thanks{G.W. is partially supported by NSF DMS 1811558 }
\email{\href{mailto:wei@math.ucsb.edu}{wei@math.ucsb.edu}}
\author{Qi S. Zhang}
\address{Department of Mathematics\\
University of California\\
Riverside, CA 92521}
\thanks{Q.Z. is partially supported by Simons Foundation grant 282153}
\email{\href{mailto:qizhang@math.ucr.edu}{qizhang@math.ucr.edu}}
\date{}
\keywords{Laplace eigenvalue, Integral Ricci curvature}
\theoremstyle{definition}
\newtheorem{theorem}{Theorem}[section]
\newtheorem{remark}{Remark}[section]
\newtheorem{proposition}[theorem]{Proposition}
\begin{document}

\begin{abstract}
We prove a sharp Zhong-Yang type eigenvalue lower bound for closed Riemannian manifolds with control on integral Ricci curvature.
\end{abstract}
\maketitle

\section{Introduction}
One trend in Riemannian geometry since the 1950's has been the study of how curvature affects global quantities like the eigenvalues of the Laplacian. On a closed Riemannian manifold $(M^n,g)$, assuming that $\Ric_M \geq (n-1)H$ ($H>0$), Lichnerowicz \cite{Lichnerowicz} proved the lower bound $\lambda_1 (M) \geq Hn$, where $\lambda_1$ is the first nonzero eigenvalue of the Laplace-Beltrami operator in $(M^n,g)$,
\begin{equation*}
\diver(\nabla u) := \Delta u =- \lambda_1 u.
\end{equation*}
Obata \cite{Obata1962} proved the rigidity result that equality holds if and only if $M^n$ is isometric to $\mathbb S_H^n$.  In the case $\Ric_M \geq 0$, one can not prove a positive lower bound without a constraint on the diameter of $M$. Note that the diameter constraint is automatic when $H>0$ by Myers' theorem. If the diameter of $M$ is $D$, Li and Yau \cites{LiYau, li} proved a gradient estimate for the first nontrivial eigenfunction and showed that
$$\lambda_1(M) \geq \frac{\pi^2}{2D^2}.$$
Zhong and Yang \cite{ZhongYang} improved this result, and obtained the optimal estimate
$$\lambda_1(M) \geq \frac{\pi^2}{D^2}.$$
 Hang and Wang \cite{HangWang} proved the rigidity result that
 equality holds if and only if $M$ is isometric to $\mathbb S^1$ with radius $\frac{D}{\pi}$. When $H<0$, improving Li-Yau's estimate \cite{LiYau},  Yang showed the following explicit estimate in \cite{Yang90},
\begin{equation}\label{negative}
\lambda_1(M) \geq \frac{\pi^2}{D^2}e^{-C_n\sqrt{(n-1)|H|}D},
\end{equation}
where $C_n = \max\{\sqrt{n-1},\sqrt{2}\}$. The general optimal lower bound estimate for $\lambda_1(M)$ for all $H$ is proved in \cites{Kroger1992, Chen-Wang1997, Andrews-Clutterbuck2013, Zhang-Wang17} using gradient estimate, probabilistic `coupling method'  and modulus of continuity, respectively, see also \cites{Bakry-Qian2000, Andrews-Ni2012}.  The general lower bound gives a comparison of the first eigenvalue to a one-dimensional model and an explicit lower bound was given by Shi and Zhang \cite{Shi-Zhang} which takes the form
\begin{equation}\label{generallower}
\lambda_1(M) \geq 4(s-s^2)  \frac{\pi^2}{D^2}+ s (n-1)H \ \ \mbox{for all} \  s \in (0,1).
\end{equation}

In recent years, there has been an increasing interest in relaxing the pointwise curvature assumption, by assuming a bound in an $L^p$ sense as in \cite{Gallot1988}.  In the fundamental work \cite{PetersenWei} the basic Laplacian comparison and Bishop-Gromov volume comparison have been extended to integral Ricci curvature. Many topological invariants can be expressed in terms of $L^p$ norms of the curvature, and these bounds are also more suitable than pointwise bounds in the study of Ricci flow.  To be more precise, let $\rho\left( x\right) $ be the smallest eigenvalue for the Ricci tensor. For a constant $H \in \mathbb R$, let $\rho_H$ be the amount of Ricci curvature lying below $(n-1)H$, i.e.
\begin{equation}\label{rhoH}
\rho_H=\max\{-\rho(x)+(n-1)H, 0\}.
\end{equation}
We will be concerned mainly with $\rho_0$, the negative part of $\Ric$. The following quantity measures the amount of Ricci curvature lying below $(n-1)H$ in an $L^p$ sense

$$\bar k(p,H)=\left(\frac{1}{\vol(M)}\int_M \rho_H^p dv\right)^{\frac{1}{p}}= \left(\fint_M \rho_H^p dv\right)^{\frac{1}{p}}.$$
Clearly $\bar k(p,H) = 0$ iff $\Ric_M \ge (n-1)H$.

In \cite{Gallot1988}, Gallot obtained a lower bound for $\lambda_1(M)$ for closed manifolds with diameter bounded from above and $\bar k(p,H)$ small by a heat kernel estimate, see Theorem~\ref{Gallot} below.
The estimate is not optimal though. When  $H>0$, Aubry \cite{Aubry2007} obtained an optimal lower bound estimate for the first nonzero eigenvalue, which recovers the Lichnerowicz estimate.  Recently the second and third authors  \cite{Seto-Wei2017} extended this to the $p$-Laplacian.

In this paper we obtain an optimal estimate for $H=0$, recovering the Zhong-Yang estimate.  Namely,
\begin{theorem} \label{main1}
	Let $(M^n,g)$ be a closed Riemannian manifold with diameter $\le D$ and $\lambda_1(M)$ be the first nonzero eigenvalue.  For any $\alpha \in (0,1)$,  $p > \frac n2$, $n\geq 2$, there exists $\epsilon(n,p,\alpha,  D)>0$ such that if $\bar k (p, 0) \le \epsilon$, then
	\begin{equation*}
	\lambda_1(M) \ge \alpha \frac{\pi^2}{D^2}.
	\end{equation*}
\end{theorem}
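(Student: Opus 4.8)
The plan is to prove a Zhong--Yang type gradient estimate for the first eigenfunction against a slightly \emph{enlarged} barrier and then integrate it along a minimizing geodesic joining a minimum to a maximum of the eigenfunction; enlarging the barrier is affordable precisely because $\alpha<1$, and it is this slack that lets one absorb the error created by the integral Ricci term. Normalize the first eigenfunction $u$ so that $\Delta u=-\lambda_1 u$, $\max_M u=1$, $b:=-\min_M u\in(0,1]$ (replace $u$ by $-u$ if necessary; $b>0$ since $\fint_M u=0$), and assume $\lambda_1 D^2<\pi^2$, since otherwise there is nothing to prove. Fix $\alpha'=\tfrac12(1+\alpha)\in(\alpha,1)$. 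Let $\phi_b$ be the ($b$-dependent) sharp Zhong--Yang barrier of \cite{ZhongYang}, for which $|\nabla u|^2\le\lambda_1\phi_b(u)$ holds when $\Ric_M\ge0$, $\phi_b\ge0$ on $[-b,1]$, and $\int_{-b}^1\phi_b^{-1/2}\,dt=\pi$, and set $\psi:=(1+\eta)\phi_b+\eta$ with $\eta=\eta(n,\alpha',b)>0$ small, so that $\psi$ is smooth, $\psi\ge\eta>0$ on $[-1,1]$, $\psi>\phi_b$, and $\int_{-b}^1\psi^{-1/2}\,dt\ge\sqrt{\alpha'}\,\pi$ (the latter by dominated convergence as $\eta\to0$). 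Put $P:=|\nabla u|^2/\psi(u)$, which is smooth because $\psi\ge\eta>0$; the goal is now $\sup_M P\le\lambda_1$.

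Next I would record the a priori bounds, all uniform for $\bar k(p,0)\le\epsilon\le\epsilon_0(n,p,D)$ and $\diam M\le D$: Theorem~\ref{Gallot} gives $\lambda_1\ge\lambda_0(n,p,D)>0$; Gallot's normalized Sobolev inequality \cite{Gallot1988} together with the comparison geometry of \cite{PetersenWei} give a uniform Sobolev inequality, volume doubling, relative volume comparison, and in particular $c_v(n,p,D)\le\vol(M)\le C_v(n,p,D)$ after using $\lambda_1\le\pi^2/D^2$; the Moser iteration for subsolutions of $\Delta w\ge -Vw$ with $V\in L^p$, $p>\tfrac n2$ (as developed for integral Ricci bounds, with uniform constants by the above), applied to the eigenvalue equation and, via $\Delta|\nabla u|^2\ge -2(\lambda_1+\rho_0)|\nabla u|^2$, to $|\nabla u|^2$, gives $\|u\|_{C^0}=1$ and $\|\nabla u\|_{C^0}\le C_1(n,p,D)$; and then $u\ge\tfrac12$ on a ball $B_r(x_{\max})$ of radius $r=(2C_1)^{-1}$, so by $\fint_M u=0$ and volume comparison $b\,\vol(M)\ge\int_{\{u<0\}}|u|=\int_{\{u\ge0\}}u\ge\tfrac12\vol(B_r(x_{\max}))\ge b_0(n,p,D)\,\vol(M)$, i.e.\ $b\ge b_0>0$. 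Hence $\psi,\psi',\psi''$ and every constant below depend only on $n,p,D,\alpha'$; write $W:=\|\nabla u\|_{C^0}^2$.

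On the open set $\Omega:=\{P>\lambda_1\}=\{|\nabla u|^2>\lambda_1\psi(u)\}$ one has $|\nabla u|^2>\lambda_1\eta>0$, so everything is nondegenerate there. Running the Bochner formula, the refined Kato inequality, and the defining ODE of the Zhong--Yang barrier exactly as in the pointwise proof, but retaining $\Ric(\nabla u,\nabla u)\ge -\rho_0|\nabla u|^2$, I expect to reach on $\Omega$ an inequality of the form
\[
\tfrac12\Delta P\ \ge\ \langle X,\nabla P\rangle+(P-\lambda_1)\,\Psi_1(u)+\Psi_0(u)-C_2\,\rho_0,
\]
where $X=f(u)\nabla u$ with $f$ --- and hence $|X|$ and $\diver X=f'(u)|\nabla u|^2-\lambda_1 f(u)u$ --- bounded by $C_2=C_2(n,p,D,\alpha')$, $\Psi_1\ge0$, and $\Psi_0\ge\psi_0>0$ with $\psi_0=\psi_0(n,p,D,\alpha')$ arising from the gap between $\psi$ and the sharp barrier $\phi_b$ (for $\phi_b$ itself the analogue of $\Psi_0$ vanishes identically). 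Dropping the nonnegative term $(P-\lambda_1)\Psi_1$, the function $v:=(P-\lambda_1)_+$ is a weak subsolution on $M$ of $\tfrac12\Delta v\ge\langle X,\nabla v\rangle+\psi_0\,\mathbf 1_{\{v>0\}}-C_2\rho_0$. Testing against $v^{2q-1}$, integrating by parts (the drift contributes $-\tfrac1{2q}\int(\diver X)v^{2q}$, absorbed into $\psi_0\int v^{2q}$ for $q\ge q_0$ after bounding $\int v^{2q-1}\ge(\max_M P)^{-1}\int v^{2q}$), and then using H\"older with $\int_M\rho_0^p\le\epsilon^p\vol(M)$, the uniform Sobolev inequality, and interpolation (where the smallness of $\bar k(p,0)$ absorbs the top-order term), one obtains $\|v\|_{L^{2\chi q}}\le(Cq\epsilon)^{c/q}\|v\|_{L^{2q}}$ for some fixed $\chi>1$, $c>0$; iterating over $q=q_0\chi^{j}$ and using $\|v\|_{L^{2q_0}}\le C\|v\|_{L^\infty}$ gives $\|v\|_{L^\infty}\le C\epsilon^{c}\|v\|_{L^\infty}$, hence $v\equiv0$ once $C\epsilon^{c}<1$. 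Thus $|\nabla u|^2\le\lambda_1\psi(u)$ on $M$, and with $F(t):=\int_{-b}^t(\lambda_1\psi(s))^{-1/2}\,ds$ we get $|\nabla(F\circ u)|=F'(u)|\nabla u|\le1$; therefore $F\circ u$ is $1$-Lipschitz, so $\lambda_1^{-1/2}\sqrt{\alpha'}\,\pi\le F(1)\le\diam M\le D$, i.e.\ $\lambda_1\ge\alpha'\pi^2/D^2>\alpha\pi^2/D^2$.

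The main obstacle is the displayed differential inequality together with the uniformity of the constants: one must carry the sharp Zhong--Yang Bochner computation through while packaging every error into the single term ``a priori constant $\times\,\rho_0$'', genuinely \emph{extract} a positive lower-order coefficient $\psi_0>0$ from the gap $1-\alpha'$ (and verify that $\Psi_0$ stays bounded below up to $u=\pm1$, where $\psi>0$ but nondegeneracy is only guaranteed on $\Omega$), and ensure that the Sobolev, volume-doubling, eigenvalue, $C^1$ and $b_0$ constants are all uniform in $\epsilon$, so that the Moser iteration closes; choosing the enlarged barrier so that $\psi_0>0$ while still $\int_{-b}^1\psi^{-1/2}\,dt\ge\sqrt{\alpha'}\,\pi$ is the delicate point. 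The two-point and coupling proofs of the sharp pointwise estimate (\cite{Chen-Wang1997, Andrews-Clutterbuck2013, Zhang-Wang17}) seem less well suited to this setting, since they would require integral curvature control along all minimal geodesics between pairs of points rather than over $M$ as a whole.
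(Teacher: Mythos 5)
Your overall strategy is genuinely different from the paper's: instead of absorbing the curvature term into an auxiliary function $J$ solving \eqref{Jeqn} and then running a pointwise maximum-principle argument (the paper's route), you keep $\rho_0$ as an explicit error, enlarge the barrier to create slack, and try to kill the excess $v=(P-\lambda_1)_+$ by a global Moser iteration, for which only $\|\rho_0\|_p^*$ matters. That is a reasonable idea, and in the symmetric case ($b=1$, no $Z$-correction) your displayed differential inequality is in fact derivable: with $\psi=(1+\eta)(1-u^2)+\eta$ the zeroth-order term one gets on $\{P>\lambda_1\}$ is $\tfrac{2(1+2\eta)(1+\eta)P}{\psi}\bigl(P-\tfrac{\lambda_1}{1+\eta}\bigr)\geq \eta\,\Lambda_{rough}^2>0$, and the Ricci error is $-2\rho_0P\geq-C_2\rho_0$ by the a priori $C^1$ bound. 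But the crux of Zhong--Yang is the asymmetric case, and there your plan has a genuine gap at exactly the step you flag as ``expected.'' The $b$-dependent barrier is, up to an affine change of variable, $(1-u^2)+2aZ(u)$ with $Z$ as in \eqref{ODE}, and $Z''(u)=-\tfrac{4\eta}{\pi}\tfrac{u}{\sqrt{1-u^2}}$ blows up at the endpoints; so your assertion that $\psi''$ is bounded by a constant depending only on $n,p,D,\alpha'$ is false whenever $b<1$. In the classical (and the paper's) computation this is harmless because $Z''$ only ever appears multiplied by $(1-u^2)$ or by $Z$ (via substituting the barrier value of $|\nabla u|^2$ and the ODE \eqref{ODE}, plus \eqref{ODEineq1}), and those factors vanish at the endpoints at the right rate. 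Your additive enlargement destroys precisely this cancellation: in the Bochner computation for $P=|\nabla u|^2/\psi(u)$ the term $\tfrac{P}{2}\psi''(u)|\nabla u|^2=\tfrac{P^2}{2}\psi''\psi$ appears, and since $\psi\geq\eta>0$ the coefficient of $Z''$ no longer vanishes at $u=-b$, so near the minimum of $u$ (a region you cannot a priori exclude from $\Omega$) this term tends to $-\infty$ with the bad sign and overwhelms any fixed $\psi_0>0$. So the key inequality $\tfrac12\Delta P\geq\langle X,\nabla P\rangle+(P-\lambda_1)\Psi_1+\Psi_0-C_2\rho_0$ with $\Psi_0\geq\psi_0>0$ and $C_2$ a priori is not established and, for your specific $\psi$, fails as stated; one would need a different enlargement or an extra argument keeping $\Omega$ away from the extrema, and this is exactly the difficulty that led the paper to introduce $J$ (and to put the enlargement into the ODE parameter $\eta=1+\delta$ and the constant $c$, not into an additive shift of the barrier).

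Two smaller points. First, the sharp barrier satisfies $\int\phi_b^{-1/2}\geq\pi$ (via the oddness/convexity pairing used in the paper's final integration), not $=\pi$, and the uniformity of $\int\psi^{-1/2}\geq\sqrt{\alpha'}\pi$ in the asymmetry parameter should be checked by that same pairing rather than dominated convergence (your auxiliary bound $b\geq b_0$ is then not needed for this step; also note $\vol(M)$ has no lower bound under these hypotheses, though you only use relative volume comparison, which is fine). Second, the iteration conclusion $\|v\|_{L^{2\chi q}}\leq(Cq\epsilon)^{c/q}\|v\|_{L^{2q}}$ cannot hold in this homogeneous form, because the source term $C_2\rho_0$ is not multiplied by $v$; the correct output of an inhomogeneous Moser iteration, combined with the zeroth-order reverse-H\"older inequality coming from $\psi_0$, is $\|v\|_\infty\leq C\epsilon^{c}$ rather than $v\equiv0$. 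That weaker conclusion would still suffice for the theorem (absorb $C\epsilon^c$ using $\alpha<\alpha'$), so this part is fixable; the unproven differential inequality is the real gap.
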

The constant $\epsilon$ can be explicitly computed, see \S \ref{appendix}.

The proofs for $\lambda_1(M)$ we mentioned before for  pointwise lower Ricci curvature bound do not work well with integral curvature condition. Here we use a gradient estimate similar to the one of Li and Yau. Their technique can not be applied directly in the integral curvature case, as it relies on a pointwise lower bound to control the term $\Ric (\nabla u, \nabla u)$ coming from the Bochner formula.
To overcome this difficulty, we use the technique developed by Zhang and Zhu in \cite{ZhangZhu1} and \cite{ZhangZhu2}. See also \cite{Carron, Rose2018, Ramos}. The strategy consists in introducing an auxiliary function $J$ via a PDE that absorbs the curvature terms appearing in the Bochner formula, and to find appropriate bounds for $J$ (see \S \ref{estimates}). We also follow the approach of \cite{li}, that uses an ODE comparison technique instead of the original barrier functions of \cite{ZhongYang} (see \S \ref{comparison}).

\begin{remark}
 In general, $p>\frac{n}{2}$ and the smallness of $\bar k(p,0)$ are both necessary conditions, see e.g. \cite{DaiWeiZhang}. In particular, for our estimate, the example of the dumbbells of Calabi shows that only assuming that $\bar k(p,0)$ is bounded is not enough: consider a dumbbell $D_\epsilon \subseteq \mathbb{R}^3$, consisting of two equal spheres joined by a thin cylinder of length $l$ and radius $\epsilon$, with smooth necks. Assume without loss of generality that $\vol (D_\epsilon) \leq 1$. Then, as explained in \cite{Cheeger}, $\lambda_1 (D_\epsilon) \leq C \epsilon^2$, so no positive lower bound is possible, as we can consider a sequence of dumbbells with $\epsilon \rightarrow 0$. Notice that in this example, $\bar k(p,0)$ can not be made small. This follows from Gauss-Bonnet: since $D_\epsilon$ is homeomorphic to $S^2$, the integral of the sectional curvature over $D_\epsilon$ is $4\pi$. However, over the two spheres it is close to $8\pi$, and over the cylinder it's $0$. Hence, the two necks contain negative curvature, that amounts to almost $-4\pi$. This implies that $\vol (D_\epsilon) \bar k(1,0) \approx 4\pi$. We can make the construction so that $\bar k(1,0) >2\pi$, thus for $p>\frac{n}{2} = 1$ we have $\bar k(p,0)\geq \bar k(1,0) >2\pi$, so the integral curvature can not be made small.
\end{remark}

\begin{remark}
 Since in the case $\Ric_M \geq 0$ one has rigidity, a natural question would be to ask if one could get almost rigidity in the Gromov-Hausdorff sense (see \cite{Sakai}), i.e. if $\lambda_1(M)$ is close to $\lambda_1(S^1) = \frac{\pi^2}{D^2}$, can we conclude that $M$ is close to $S^1$ in the Gromov-Hausdorff topology? As was explained in \cite{HangWang} page 8, this is not true, as one could consider the shrinking sequence of boundaries of $\epsilon$-neighborhoods of a line segment with length $\pi$, whose eigenvalues converge to $1$, but that converges in the Gromov-Hausdorff sense to the line segment.
\end{remark}

\begin{remark}
As the proof given in \cite{Yang90} for the case $H<0$ is similar to the case $H=0$, adjusting our proof accordingly should yield an integral curvature version of the estimate \eqref{negative}.  We conjecture that  integral curvature versions of the estimate \eqref{generallower} and the optimal lower bound estimate when $H<0$ should also hold.
\end{remark}

The paper is organized as follows: in \S \ref{estimates} we prove estimates on the auxiliary function $J$ mentioned above, in \S \ref{comparison} we prove the sharp gradient estimate needed to derive our main theorem, and we prove Theorem~\ref{main1} in \S \ref{proofmainthm}. Finally, in \S \ref{appendix} we have an appendix with explicit estimates on $\epsilon$ (the upper bound of $\bar k(p,0)$) depending on the Sobolev and Poincar\'e constants, $p$, $n$ and $D$.

{\bf Acknowledgements.} The authors would  like to thank 
Christian Rose for his interest in the paper and  Hang Chen for helpful comments in an earlier version of the paper. We also thank Jian Hong Wang for carefully checking the earlier version and finding a typo.

\section{Estimates on the auxiliary function}\label{estimates}

In what follows, for $f\in L^p(M)$, we use the notation

\begin{equation*}
\|f\|_p^* := \left(\fint_M |f|^p dv\right)^{\frac{1}{p}}.
\end{equation*}

First we recall an earlier eigenvalue and Sobolev constant estimate for closed manifolds with integral Ricci curvature bounds which we will use.
\begin{theorem} \cite[Theorem 3,6]{Gallot1988}  \label{Gallot}
	Given $M^n$ closed Riemannian manifold with diameter $D$, for $p> n/2$, $H \in \mathbb R$,  there exist $\varepsilon \left(
	n,p, H, D\right) >0, \ C\left( n,p,H, D\right) >0$ such that if $\bar k(p,H) \leq
	\varepsilon,$  then 	
	Is$(M) \le C\left( n,p,H, D\right) /\vol (M)^{\frac 1n}$, where Is$(M) =\sup \{\frac{\vol (\Omega) \}^{1-\frac{1}{n}} }{ \vol (\partial \Omega)} : \Omega \subset M, \vol(\Omega) \le \frac 12 \vol (M) \}.$
In particular,
\begin{equation}\label{roughbound}
 \lambda_1(M) \geq \Lambda_{rough} (n,p,H,D) >0,
  %\left(\frac{C_1(p)}{p}\right)^2 2^{\frac{2}{p}}\gamma(\alpha,D)^2
\end{equation}
and for any $u\in W^{1,2}(M)$,
\begin{equation}\label{Sobolev}
\|u-\bar{u}\|^*_{\frac{2p}{p-1}} \leq C_s \|\nabla u\|_2^*
\end{equation}
\begin{equation}\label{Sobolev2}
\|u\|^*_{\frac{2p}{p-1}} \leq  C_s \|\nabla u\|_2^* + \|u\|_2^*
\end{equation}
where $\bar{u} = \fint_M u$, the average of $u$, and $C_s = \left(\frac{p}{p-1}\right)^{\frac{1}{2}}C(n,p,H,D)$.
\end{theorem}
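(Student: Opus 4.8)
This estimate is due to Gallot \cite{Gallot1988}; the way I would reconstruct it is to replace Gallot's original heat-kernel estimates by the integral Laplacian and relative volume comparison of \cite{PetersenWei}, and then argue in four steps: (i) record the integral Laplacian comparison; (ii) derive the isoperimetric bound $\mathrm{Is}(M)\le C/\vol(M)^{1/n}$ by a Heintze-Karcher-type computation; (iii) pass from it to \eqref{Sobolev} and \eqref{Sobolev2}; and (iv) deduce \eqref{roughbound} from \eqref{Sobolev}. For step (i), fix $x_0\in M$, put $r=d(x_0,\cdot)$ and $\psi=(\Delta r-m_H(r))_+$, where $m_H(r)$ is the mean curvature of the distance sphere of radius $r$ in the simply connected space form $M_H^n$. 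Integrating the Bochner/Riccati inequality for $\Delta r$ against suitable test functions — the step that forces the hypothesis $p>n/2$, so that the associated one-dimensional comparison estimate is finite — gives a bound of the shape $\|\psi\|^*_{2p}\le C(n,p,H,D)\,\bar k(p,H)^{1/2}$; in particular, once $\bar k(p,H)\le\varepsilon$ with $\varepsilon$ small, $\|\psi\|^*_{2p}$ is as small as we wish.

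For step (ii), take $\Omega\subset M$ with smooth boundary $\Sigma=\partial\Omega$ and $\vol(\Omega)\le\tfrac12\vol(M)$, and push $\Sigma$ inward along the signed distance function. The Jacobian of the normal exponential map evolves through the mean curvature of the level hypersurfaces, which obeys a Riccati inequality controlled by $\Ric(\partial_r,\partial_r)$; comparing with the space form and absorbing the defect via $\psi$, one integrates along the normal geodesics and over $\Sigma$ to obtain $\vol(\Omega)\le C(n,p,H,D)\,\vol(M)^{1/n}\,\vol(\Sigma)$, i.e. $\mathrm{Is}(M)\le C(n,p,H,D)/\vol(M)^{1/n}$. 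The smallness of $\bar k(p,H)$ is precisely what prevents the curvature defect from swamping the model term, and the exponent $\vol(M)^{1/n}$ is the one that makes the resulting $L^1$-Sobolev inequality volume-free.

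For steps (iii)--(iv), the coarea formula turns the isoperimetric bound into the normalized $L^1$-Sobolev inequality $\|u-\bar u\|^*_{n/(n-1)}\le C(n,p,H,D)\,\|\nabla u\|^*_1$. Applying this to $|u-\bar u|^{\gamma}$ and using Hölder on both sides produces the Gagliardo-Nirenberg family $\|u-\bar u\|^*_q\le C\,\|\nabla u\|^*_2$ for every $q$ up to the critical Sobolev exponent $\tfrac{2n}{n-2}$ (interpreted as $+\infty$ when $n=2$); since $p>\tfrac n2$ makes $\tfrac{2p}{p-1}$ strictly smaller than that exponent, the choice $q=\tfrac{2p}{p-1}$ is admissible and \eqref{Sobolev} follows, with $C_s$ depending only on $n,p,H,D$. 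Inequality \eqref{Sobolev2} is then immediate from $u=(u-\bar u)+\bar u$, the triangle inequality, and $\|\bar u\|^*_{2p/(p-1)}=|\bar u|\le\|u\|^*_2$. Finally, for a first eigenfunction $u$ one has $\bar u=0$ and $(\|\nabla u\|^*_2)^2=\lambda_1(M)\,(\|u\|^*_2)^2$, so since the normalized measure is a probability measure and $2\le\tfrac{2p}{p-1}$, $\|u\|^*_2\le\|u\|^*_{2p/(p-1)}\le C_s\|\nabla u\|^*_2=C_s\sqrt{\lambda_1(M)}\,\|u\|^*_2$, giving \eqref{roughbound} with $\Lambda_{rough}(n,p,H,D)=C_s^{-2}$.

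The heart of the matter, and the main obstacle, is step (ii) — equivalently, Gallot's original heat-kernel estimate — namely quantitatively bounding the failure of the pointwise Laplacian and volume comparisons by the small quantity $\bar k(p,H)$, and keeping every error term controlled through the Heintze-Karcher integration. Steps (iii)--(iv) are soft once the isoperimetric estimate is in hand, and the hypothesis $p>n/2$ enters twice: in the convergence of the one-dimensional comparison estimate in (i)--(ii), and in making $\tfrac{2p}{p-1}$ a subcritical Sobolev exponent in (iii).
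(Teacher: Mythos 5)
The paper does not prove this statement: it is quoted from Gallot, with the optimal power $\vol(M)^{1/n}$ in the isoperimetric constant credited to Petersen--Sprouse, and the Sobolev/Poincar\'e consequences are then standard. Your steps (iii)--(iv) reproduce those standard consequences correctly: the coarea formula turns the isoperimetric bound into the normalized $L^1$-Sobolev inequality, the exponent $\frac{2p}{p-1}$ is subcritical exactly when $p>\frac n2$ (indeed $\frac{2p}{p-1}<\frac{2n}{n-2}\iff p>\frac n2$), \eqref{Sobolev2} follows from the triangle inequality, and the eigenvalue bound $\lambda_1\geq C_s^{-2}$ follows from $\bar u=0$ and $\|u\|_2^*\leq\|u\|_{2p/(p-1)}^*$ on the normalized (probability) measure. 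Step (i) is also the correct input (the Petersen--Wei $L^{2p}$ bound on $(\Delta r-m_H(r))_+$, which is where $p>\frac n2$ enters).

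The genuine gap is in step (ii), which you yourself identify as the heart of the matter but then dispatch with an argument that does not work as stated. First, the inequality you claim to obtain, $\vol(\Omega)\le C\,\vol(M)^{1/n}\vol(\Sigma)$, is not equivalent to (and does not imply) $\mathrm{Is}(M)\le C/\vol(M)^{1/n}$: from it one only gets $\vol(\Omega)^{1-\frac1n}/\vol(\Sigma)\le C\,\vol(M)^{1/n}/\vol(\Omega)^{1/n}$, which blows up for small $\Omega$; what is needed is $\vol(M)^{1/n}\,\vol(\Omega)^{1-\frac1n}\le C\,\vol(\Sigma)$. Second, the mechanism you describe --- pushing $\Sigma$ inward along the signed distance and integrating the Riccati/Heintze--Karcher estimate over $\Sigma$ --- only yields a linear (Cheeger/Buser-type) bound $\vol(\Omega)\le C(n,p,H,D)\,\vol(\Sigma)$, because each normal geodesic is traversed once over a length at most $D$; no factor $\vol(M)^{1/n}$ or power $\vol(\Omega)^{1-\frac1n}$ can appear this way. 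Producing the dimensional isoperimetric inequality requires the Gromov/B\'erard--Meyer argument (comparing, for points of the smaller piece, the volume swept by geodesics reaching the complement through $\partial\Omega$, and optimizing to extract the power $1-\frac1n$ and the $\vol(M)$ normalization), or Gallot's original heat-kernel/symmetrization route; carrying the integral-curvature error terms through that argument is precisely the content of Petersen--Sprouse. As written, your outline re-cites this key step rather than proving it, and the one quantitative statement you give for it is incorrect; the rest of the reduction is fine and matches how the theorem is used in the paper.
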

In \cite{Gallot1988} the weaker isoperimetric constant was obtained, it was improved to the optimal power above in
\cite{Petersen-Sprouse1998}.

\begin{remark}
 From the estimate on $\lambda_1$ we can derive a Poincar\'e inequality. Notice that

 $$\Lambda_{rough} \leq \lambda_1 = \inf_{\fint_M f dv = 0} \frac{\fint_M |\nabla f|^2dv}{\fint_M f^2 dv} \leq \frac{\fint_M |\nabla w|dv}{\fint_M |w-\overline{w}|^2dv},$$
 where $w\in H^1(M)$. Hence we have

 \begin{equation}\label{Poincare}
  (\|w-\overline{w}\|_2^*)^2 \leq \Lambda_{rough}^{-1}(\|\nabla w\|_2^*)^2.
 \end{equation}
\end{remark}

As mentioned in the introduction, in the proof of the gradient estimate in Proposition \ref{gradientestimate} we introduce an auxiliary function $J$ that absorbs the curvature terms. To be able to derive a sharp lower bound for $\lambda_1(M)$, we need to construct and estimate $J$ from a PDE as follows. For $\tau >1$ and $\sigma \geq 0$, consider
\begin{equation}\label{Jeqn}
\Delta J - \tau\frac{|\nabla J|^2}{J}-2J\rho_0 = -\sigma J.
\end{equation}
Here $\rho_0$ is defined as in \eqref{rhoH}. Using the transformation $J = w^{-\frac{1}{\tau-1}}$, we see that this equation is equivalent to
\begin{equation}\label{weqn}
 \Delta w + Vw = \tilde{\sigma} w,
\end{equation}
where $V = 2(\tau-1)\rho_0$ and $\tilde{\sigma} = (\tau-1)\sigma$. We choose $-\tilde{\sigma}$ be the first eigenvalue of the operator $\Delta +V$; in particular, if $\Ric_M \geq 0$ we have $\sigma = \tilde{\sigma} = 0$, and $J \equiv 1$ is a solution to \eqref{Jeqn}. The main goal of this section is to prove the following propositions.
\begin{proposition}\label{sbounds}
 There exists $\epsilon (n,p,D,\tau)>0$ such that if $\bar k(p,0)\leq \epsilon$, then there is a number $\sigma$ and a corresponding function $J$ solving \eqref{Jeqn} such that
  \begin{equation}
0 \leq \sigma \leq 4\epsilon. \label{sigmabounds}
  \end{equation}
\end{proposition}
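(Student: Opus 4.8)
The plan is to define $\sigma$ --- equivalently $\tilde\sigma=(\tau-1)\sigma$ --- spectrally through the linearized equation \eqref{weqn} and then transfer the data back to $J$ via the substitution $J=w^{-1/(\tau-1)}$. Writing $V=2(\tau-1)\rho_0\ge 0$ as in \eqref{weqn}, I would let $-\tilde\sigma$ be the bottom of the spectrum of $\Delta+V$ on the closed manifold $M$, so that
\[
\tilde\sigma \;=\; \sup_{0\neq w\in W^{1,2}(M)} \frac{\displaystyle\int_M\bigl(Vw^2-|\nabla w|^2\bigr)\,dv}{\displaystyle\int_M w^2\,dv}.
\]
Since $\rho_0$ is Lipschitz (it is the negative part of the smallest eigenvalue of $\Ric$) and $M$ is closed, this supremum is attained by an eigenfunction $w$, which by elliptic regularity lies in $W^{2,q}(M)$ for every $q<\infty$ and, after replacing it by $|w|$ and invoking the strong maximum principle, may be taken strictly positive; set $\sigma=\tilde\sigma/(\tau-1)$ and $J=w^{\beta}$ with $\beta=-1/(\tau-1)<0$. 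A direct computation gives
\[
\Delta J-\tau\,\frac{|\nabla J|^2}{J} \;=\; \bigl(\beta(\beta-1)-\tau\beta^2\bigr)\,w^{\beta-2}|\nabla w|^2+\beta\,w^{\beta-1}\Delta w \;=\; \beta\,w^{\beta-1}\Delta w,
\]
because $\beta(\beta-1)-\tau\beta^2=\beta\bigl(\beta(1-\tau)-1\bigr)=0$; dividing $\Delta w+Vw=\tilde\sigma w$ by $\beta w^{\beta-1}$ then shows that \eqref{weqn} and \eqref{Jeqn} are equivalent, so $J$ is the desired positive solution of \eqref{Jeqn}. The lower bound is immediate: testing the Rayleigh quotient above with $w\equiv 1$ and using $V\ge 0$ gives $\tilde\sigma\ge\fint_M V\,dv\ge 0$, hence $\sigma\ge 0$.

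For the upper bound I would argue as follows. First choose $\epsilon$ small enough that Theorem~\ref{Gallot} applies with $H=0$, which supplies a Sobolev constant $C_s=C_s(n,p,D)$ in \eqref{Sobolev2}. Normalizing the positive eigenfunction so that $\|w\|_2^*=1$, we have $\tilde\sigma=\fint_M Vw^2\,dv-(\|\nabla w\|_2^*)^2$, so the whole matter reduces to controlling the potential term. By Hölder's inequality with exponents $p$ and $p/(p-1)$, $\fint_M\rho_0 w^2\,dv\le\bar k(p,0)\,(\|w\|^*_{2p/(p-1)})^2$, and combining this with \eqref{Sobolev2}, the normalization $\|w\|_2^*=1$, and $(a+b)^2\le 2a^2+2b^2$ yields
\[
\fint_M Vw^2\,dv \;\le\; 2(\tau-1)\,\bar k(p,0)\,\bigl(2C_s^2\,(\|\nabla w\|_2^*)^2+2\bigr).
\]
Hence $\tilde\sigma\le\bigl(4(\tau-1)C_s^2\,\bar k(p,0)-1\bigr)(\|\nabla w\|_2^*)^2+4(\tau-1)\bar k(p,0)$. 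Shrinking $\epsilon$ further so that $4(\tau-1)C_s^2\,\epsilon\le 1$, the coefficient of $(\|\nabla w\|_2^*)^2$ becomes nonpositive whenever $\bar k(p,0)\le\epsilon$, leaving $\tilde\sigma\le 4(\tau-1)\bar k(p,0)\le 4(\tau-1)\epsilon$, that is $\sigma\le 4\epsilon$. The resulting threshold $\epsilon$ depends on $n,p,D$ and $\tau$, as asserted.

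The hard part --- and the only place where the integral curvature hypothesis is genuinely used --- is that a priori the ground state $w$ could concentrate on the set where $\rho_0$ is large, which would make $\fint_M Vw^2\,dv$ large; the Sobolev inequality of Theorem~\ref{Gallot} is exactly what prevents this, since any such concentration is paid for by the Dirichlet energy $(\|\nabla w\|_2^*)^2$, which the eigenvalue identity then subtracts off. A secondary technical point is the two-stage choice of $\epsilon$: one first needs $\epsilon$ below Gallot's threshold merely to have a legitimate constant $C_s=C_s(n,p,D)$ available, and only afterwards can one impose the quantitative smallness $4(\tau-1)C_s^2\epsilon\le 1$. One should also keep all the normalizations consistent (all norms taken with $\fint$, the Rayleigh quotient being scale-invariant in $w$), but this is routine.
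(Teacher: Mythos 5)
Your proof is correct and follows essentially the same route as the paper: define $-\tilde\sigma$ as the first eigenvalue of $\Delta+V$, use the eigenvalue identity $\tilde\sigma=\fint_M Vw^2\,dv-(\|\nabla w\|_2^*)^2$, and absorb the potential term via Gallot's Sobolev inequality under the same smallness condition $4(\tau-1)C_s^2\epsilon\le 1$. The only differences are cosmetic: you apply \eqref{Sobolev2} directly to $w$ where the paper splits $w=(w-\overline{w})+\overline{w}$ and uses \eqref{Sobolev}, and you verify the $J=w^{-1/(\tau-1)}$ equivalence and the positivity of $w$ explicitly, which the paper simply asserts.
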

\begin{proof}
 Since $-\tilde{\sigma}$ is the first eigenvalue, $w$ doesn't change sign. In particular, by possibly scaling $w$, we can assume that $w\geq 0$, $\|w\|_2^* = 1$. By integrating equation \eqref{weqn} over $M$ we get
 \begin{equation*}
\int_M Vw dv = \tilde{\sigma} \int_M w dv.
 \end{equation*}
 Since $w\geq 0$ and $V\geq 0$, we conclude that $\tilde{\sigma} \geq 0$, so $\sigma \geq 0$.

 To obtain the upper bound, multiply equation \eqref{weqn} by $w$, integrate over $M$, and divide by $\vol (M)$. This way, we obtain:
\begin{equation*}
\tilde{\sigma} = \tilde{\sigma}\fint_M w^2dv = \fint_M w\Delta w dv + \fint_M Vw^2 dv = - \fint_M |\nabla w|^2 dv + \fint_M Vw^2dv.
\end{equation*}
Define the average of $w$
\begin{equation*}
\overline{w} := \fint_M w dv \leq \|w\|_{2}^* = 1.
\end{equation*}
 Then, using the Sobolev inequality \eqref{Sobolev}, for $p>n/2$
 \begin{align*}
  \tilde{\sigma} &=- \fint_M |\nabla w|^2 dv + \fint_M V(w+\overline{w}-\overline{w})^2dv \\
         &\leq -\fint_M |\nabla w|^2 dv + 2\fint_M V(w-\overline{w})^2dv + 2\fint_M V(\overline{w})^2dv\\
         &\leq -\fint_M |\nabla w|^2 dv + 2\|V\|_{p}^* \left( \fint_M (w-\overline{w})^{\frac{2p}{p-1}}dv\right)^{\frac{p-1}{p}} + 2\|V\|_{1}^*\\
         &\leq -\fint_M |\nabla w|^2 dv + 2C^2_s\|V\|_{p}^* \fint_M |\nabla w|^2 dv + 2\|V\|_{1}^*.
 \end{align*}
where $C_s$ is the Sobolev constant. Since $2C^2_s \|V\|_{p}^* \leq 4C^2_s (\tau-1)\epsilon$, choosing $\epsilon>0$ small enough so that $4C^2_s (\tau-1)\epsilon\leq 1$, we deduce
\begin{equation*}
\tilde{\sigma} \leq 2\|V\|_{1}^* \leq 2 \|V\|_{p}^* \leq 4(\tau-1)\epsilon .
\end{equation*}
Hence
\begin{equation*}
 \sigma \leq 4\epsilon.
\end{equation*}
\end{proof}

\begin{proposition}\label{Jbounds}
For any $\delta>0$, there exists $\epsilon (n,p,D,\tau)>0$ and a solution $J$ to \eqref{Jeqn} such that if $\bar k(p,0)\leq \epsilon$ then
\begin{equation}\label{J-1bound}
|J-1|\leq \delta.
\end{equation}
\end{proposition}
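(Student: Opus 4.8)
The plan is to estimate $J-1$ via the equivalent function $w$, using the PDE \eqref{weqn} together with the Sobolev and Poincar\'e inequalities from Theorem \ref{Gallot}. First I would normalize: since $-\tilde\sigma$ is the first eigenvalue of $\Delta + V$, the eigenfunction $w$ does not change sign, so we may take $w \geq 0$ and, rather than the $L^2$ normalization used in the previous proposition, normalize so that $\fint_M w \, dv = 1$, i.e. $\overline{w} = 1$. (The relation $J = w^{-1/(\tau-1)}$ shows that $|J-1| \leq \delta$ will follow once we know $|w-1|$ is small in a strong enough norm — specifically in $L^\infty$ — so the exponent $\tfrac{1}{\tau-1}$ just rescales $\delta$; this last $L^\infty$ step is the one requiring care, see below.)

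The key estimates proceed as follows. Set $v = w - \overline{w} = w - 1$. Subtracting the averaged form of \eqref{weqn}, $v$ satisfies $\Delta v = \tilde\sigma w - Vw$, so I would multiply by $v$ and integrate:
\begin{equation*}
\fint_M |\nabla v|^2 \, dv = \fint_M (V w - \tilde\sigma w) v \, dv \leq \fint_M V w |v| \, dv.
\end{equation*}
Now bound the right-hand side by H\"older with exponents $p$, $\tfrac{2p}{p-1}$, $\tfrac{2p}{p-1}$, writing $w = v + 1$, to get a bound in terms of $\|V\|_p^*$, $\|v\|_{2p/(p-1)}^*$ and $\|v\|_{2p/(p-1)}^*$ or $\|v\|_{2p/(p-1)}^{*}$ times a constant; then apply the Sobolev inequality \eqref{Sobolev} ($\|v\|_{2p/(p-1)}^* \leq C_s \|\nabla v\|_2^*$, legitimate since $\overline{v}=0$) to absorb the gradient terms on the left, provided $\epsilon$ is small enough that the coefficient $C\|V\|_p^* \leq C'(\tau-1)\epsilon$ of $\|\nabla v\|_2^2$ is $\leq \tfrac12$. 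This yields $\|\nabla v\|_2^* \leq C\,\epsilon$ and hence, by Poincar\'e \eqref{Poincare}, $\|v\|_2^* \leq C\,\epsilon$, and then by Sobolev again $\|v\|_{2p/(p-1)}^* \leq C\,\epsilon$.

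The main obstacle — and the step I expect to take the most work — is upgrading this smallness in $L^q$ (for the fixed exponent $q = \tfrac{2p}{p-1}$) to an $L^\infty$ bound $\|w - 1\|_\infty \leq \delta$, since $|J-1|\leq\delta$ genuinely requires a pointwise statement. I would do this by Moser iteration / De Giorgi-Nash-type bootstrapping applied to \eqref{weqn}: because $p > n/2$, the potential $V = 2(\tau-1)\rho_0 \in L^p$ with $\|V\|_p^*$ small lies in the subcritical range where the Sobolev inequality \eqref{Sobolev2} drives an iteration scheme giving $\|w\|_\infty \leq C(n,p,D,\tau)\,\|w\|_2^*$, and more precisely an oscillation estimate of the form $\|w - \overline{w}\|_\infty \leq C \|w - \overline{w}\|_{q}^{*} \leq C\epsilon$ once $\epsilon$ is small (the smallness of $\|V\|_p^*$ is what keeps the iteration constants uniform and prevents blow-up). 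Choosing $\epsilon$ small enough that $C(n,p,D,\tau)\,\epsilon \leq 1 - (1+\delta)^{-1/(\tau-1)}$ (or simply $\leq \delta'$ for a suitable $\delta'$ depending on $\delta$ and $\tau$) then gives $|w - 1| \leq \delta'$ pointwise, and since $J = w^{-1/(\tau-1)}$ with $w$ close to $1$, a Taylor/mean-value estimate yields $|J - 1| \leq \delta$ as desired. Throughout, all constants depend only on $n$, $p$, $D$, $\tau$ through $C_s$ and $\Lambda_{rough}$, so the final $\epsilon$ has the claimed dependence.
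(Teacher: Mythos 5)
Your proposal is correct and follows essentially the same route as the paper: an energy estimate for $w-\overline{w}$ combined with the Sobolev \eqref{Sobolev} and Poincar\'e \eqref{Poincare} inequalities to get smallness in $L^2$, then Moser iteration for the equation with potential $V\in L^p$, $p>\tfrac n2$, to upgrade to an $L^\infty$ bound, and finally a Taylor/mean-value estimate for $J=w^{-1/(\tau-1)}$. The only difference is your normalization $\overline{w}=1$ at the outset (legitimate, since \eqref{weqn} is linear and \eqref{Jeqn} is homogeneous in $J$), which lets you skip the paper's Claim 3 (showing $\overline{w}>\tfrac12$ under the $L^2$ normalization) and the final rescaling $w_2=w/\overline{w}$.
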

\begin{proof}
 \ \\
 {\bf Claim 1:} $\|w-\overline{w}\|_2^* \leq K_1\sqrt{\epsilon}$, where $K_1 = K_1(p,n,D,\tau)$.

 Going back to equation \eqref{weqn}, we have that, since $\overline{w} \leq 1$,
 \begin{align*}
  -\Delta w (w-\overline{w}) &= (V-\tilde{\sigma})w(w-\overline{w})\\
  & = (V-\tilde{\sigma})(w-\overline{w})^2 + \overline{w}(w-\overline{w})(V-\tilde{\sigma})\\
  &\leq |V-\tilde{\sigma}|(w-\overline{w})^2 + \overline{w}((w-\overline{w})^2|V-\tilde{\sigma}| + |V-\tilde{\sigma}|)\\
  &\leq 2(w-\overline{w})^2|V-\tilde{\sigma}| + |V-\tilde{\sigma}|.
 \end{align*}
After integration by parts and dividing by $\vol(M)$:
 \begin{align*}
  \fint_M |\nabla w|^2dv &\leq 2\fint_M |V-\tilde{\sigma}|(w-\overline{w})^2 dv + \fint_M |V-\tilde{\sigma}|dv\\
  & \leq 2\left( \fint_M |V-\tilde{\sigma}|^{p} dv \right)^{1/p} \left( \fint_M (w-\overline{w})^{\frac{2p}{p-1}} dv\right)^{\frac{p-1}{p}} + \|V-\tilde{\sigma}\|_{1}^*\\
  & \leq 2C^2_s \left(\|V\|_{p}^*+\| \tilde{\sigma} \|_{p}^* \right)\fint_M |\nabla w|^2 dv + \|V\|_{1}^*+\| \tilde{\sigma} \|_{1}^*\\
  &\leq 12C^2_s (\tau-1)\epsilon\fint_M |\nabla w|^2 dv + 6(\tau-1)\epsilon.
 \end{align*}
 Then choosing $\epsilon$ small enough so that $12C^2_s (\tau-1)\epsilon\leq \frac{1}{2}$, we deduce
\begin{equation*}
\fint_M |\nabla w|^2 dv \leq 12(\tau-1)\epsilon .
\end{equation*}
 Finally, using Poincar\'e inequality \eqref{Poincare},
\begin{equation*}
(\|w-\overline{w}\|_2^*)^2 = \fint_M |w-\overline{w}|^2 dv \leq \Lambda_{rough}^{-1} \fint_M |\nabla w|^2 dv \leq 12\Lambda_{rough}^{-1}(\tau-1)\epsilon \equiv K_1^2 \epsilon.
\end{equation*}
\noindent{\bf Claim 2: $\|w-\overline{w}\|_\infty \leq K_2(\sqrt{\epsilon}+\epsilon).$}

Denote $h:= w-\overline{w}$. To derive the $L^\infty$ bound for $h$, we will use Moser's iteration on a closed manifold. The technique written below is a slight modification of the one used in \cite{WeiYe}, introducing a potential term, (c.f. \cite{HanLin}). Notice that $h$ satisfies
\begin{equation*}
 -\Delta h = (V-\tilde{\sigma})h+\overline{w}(V-\tilde{\sigma}).
\end{equation*}
Let $g= V-\tilde{\sigma}$ and $f = \overline{w}(V-\tilde{\sigma})$, then $h$ satisfies
\begin{equation}
 -\Delta h = gh+f.
\end{equation}
Thus, $h$ is a weak solution, in the sense that
\begin{equation*}
 \int_M \nabla h \nabla \phi dv = \int_M gh\phi dv+\int_M f\phi dv
\end{equation*}
for all nonnegative $\phi \in W^{1,2}(M)$.

 Define $\overline{h} = h^+ + k$, where $k = \|f\|_{p}^*$. Notice that $\nabla \overline{h} = 0$ if $h\leq0$, and $\nabla h = \nabla\overline{h}$ if $h>0$. Consider $\phi = \frac{\overline{h}^l}{\vol (M)} \in W^{1,2}(M)$. Then for some $l\geq 1$ we have
 \[ l \fint_M |\nabla \overline{h}|^2\overline{h}^{l -1} dv \leq \fint_M gh\overline{h}^{l} dv + \fint_M f\overline{h}^l dv. \]
 Hence
\begin{align}\label{eq1}
   l \fint_M |\nabla \overline{h}|^2\overline{h}^{l -1} dv &\leq \fint_M |g|\overline{h}^{l+1} dv + \fint_M |f|\frac{\overline{h}^{l+1}}{k} dv\\ \nonumber
   &\leq \fint_M \psi \overline{h}^{l+1} dv\\ \nonumber
   &\leq \|\psi\|_{p}^* \left(\|\overline{h}\|_{(l+1)\frac{p}{p-1}}^*\right)^{l+1}, \nonumber
  \end{align}
 where $\psi = |g|+\frac{|f|}{k}$ (in particular $1\leq \|\psi\|_p^* \leq 1 + \|g\|_p^*$). Note that

 \[ \left|\nabla \left(\overline{h}^{\frac{l+1}{2}}\right)\right|^2 = \left|\frac{l+1}{2}\overline{h}^{\frac{l-1}{2}} \nabla \overline{h} \right|^2 = \frac{(l+1)^2}{4}\overline{h}^{l-1}|\nabla \overline{h}|^2.\]
So
 \begin{align*}
  \left(\|\nabla(\overline{h}^{\frac{l+1}{2}})\|_2^* \right)^2 \leq \frac{(l+1)^2}{4l} \|\psi\|_p^* \left( \|\overline{h}\|_{(l+1)\frac{p}{p-1}}^*\right)^{l+1}.
 \end{align*}

 Let $s=\frac{2p}{n}>1$ and $r=\frac{s+1}{2} > 1$.  Note that $\frac{p}{p-1} = \frac{sn}{sn-2}$.  Using the Sobolev inequality \eqref{Sobolev2} for $u\in W^{1,2}(M)$ and $q > \frac{n}{2}$,
 \[\|u\|^*_{\frac{2q}{q-1}} \leq C_s \|\nabla u\|_2^* + \|u\|_2^*,\]
and let $q = \frac{rn}{2}$.  We make this choice, $q>n/2$, so that we can treat the cases $n=2$ and $n>2$ together. Note that if $q=n/2$, then $\frac{2q}{(q-1)}=\frac{2n}{(n-2)}$ is the usual Sobolev exponent. Then we have
 \begin{align}
 \begin{split}\label{hineq}
 \left( \|\overline{h}\|_{(l+1)\frac{rn}{rn-2}}^*\right)^{\frac{l+1}{2}} = \|\overline{h}^{\frac{l+1}{2}}\|_{\frac{2rn}{rn-2}}^* &\leq C_s \|\nabla ( \overline{h}^{\frac{l+1}{2}})\|_2^* + \|\overline{h}^{\frac{l+1}{2}}\|_2^*\\
 &\leq \mathcal{A}_l (\|\overline{h}\|_{(l+1)\frac{sn}{sn-2}}^*)^{\frac{l+1}{2}} +  (\|\overline{h}\|_{l+1}^*)^{\frac{l+1}{2}},
 \end{split}
 \end{align}
where $\mathcal{A}_l = C_s \frac{l+1}{2\sqrt{l}}\sqrt{\|\psi\|_{p}^*}$ from \eqref{eq1}.

Let $a:= \frac{n(s-r)}{sn-2} >0$ so that it satisfies
\begin{align*}
a+(1-a)\left(\frac{rn}{rn-2}\right) = \frac{sn}{sn-2}.
\end{align*}
Then by H\"older and Young's inequality
\begin{align*}
xy \leq \xi x^\gamma + \xi^{-\frac{\gamma^*}{\gamma}}y^{\gamma^*},
\end{align*}
with $\gamma = \left((1-a)\left(\frac{rn}{rn-2}\right)\left(\frac{sn-2}{sn}\right)\right)^{-1}$ and $\gamma^* = \left(\frac{sn-2}{sn}a\right)^{-1}$,
\begin{align*}
(\|\oo h\|_{(l+1)\frac{sn}{sn-2}}^*)^{\frac{l+1}{2}} &=\left(\fint_M\oo h^{(l+1)\frac{sn}{sn-2}} dv\right)^{\frac{sn-2}{2sn}}\\
&\leq \left(\fint_M\oo h^{(l+1)\frac{rn}{rn-2}}dv \right)^{(1-a)\frac{sn-2}{2sn}}\left( \fint_M\oo h^{l+1}dv \right)^{\frac{sn-2}{2sn}a}\\
&\leq \xi\left(\fint_M\oo h^{(l+1)\frac{rn}{rn-2}}dv\right)^{\frac{rn-2}{2rn}} + \xi^{-\frac{(1-a)}{a}\frac{rn}{rn-2}}\left(\fint_M\oo h^{l+1}dv \right)^{\frac{1}{2}}.
\end{align*}
Inserting this into inequality \eqref{hineq} and setting $\xi = \frac{1}{2}\mathcal{A}_l^{-1}$, we have
\begin{align*}
\left(\|\oo h\|_{(l+1)\frac{rn}{rn-2}}^*\right)^{\frac{l+1}{2}} &\leq \mathcal{A}_l\left(\xi\left(\fint_M \oo h^{(l+1)\frac{rn}{rn-2}} dv\right)^{\frac{rn-2}{2rn}} + \xi^{-\frac{(1-a)}{a}\frac{rn}{rn-2}}\left(\fint_M\oo h^{l+1} dv\right)^{\frac{1}{2}} \right) + \left(\|\oo h\|_{l+1}^*\right)^{\frac{l+1}{2}}\\
&= \frac{1}{2}\left(\|\oo h\|_{(l+1)\frac{rn}{rn-2}}^*\right)^{\frac{l+1}{2}} + \left(2^{\frac{1-a}{a}\frac{rn}{rn-2}}\mathcal{A}_l^{\frac{1-a}{a}\frac{rn}{rn-2}+1}+1\right)\left(\|\oo h\|_{l+1}^*\right)^{\frac{l+1}{2}}.
\end{align*}
Renaming $l+1$ by $l > 1$, and $\mu = \frac{rn}{rn-2}$, we obtain
\begin{equation*}
\|\oo h\|^*_{l\mu} \leq \left((2\mathcal{A}_{l-1})^{\frac{s}{s-r}}+2\right)^{\frac{2}{l}}\|\oo h\|_{l}^*.
\end{equation*}
Let $l_k = l\mu^k$.  Then
\begin{align*}
\|\oo h\|_{l_k}^* \leq \prod_{j=1}^{k}\left((2\mathcal{A}_{l_{j-1}-1})^{\frac{s}{s-r}}+2\right)^{\frac{2}{l_j}}\|\oo h\|_l^*.
\end{align*}
Note that $\mathcal{A}_l = O(\sqrt{l})$ so that $\mathcal{A} = \lim_{k \to \infty}\prod_{j=1}^{k}\left((2\mathcal{A}_{l_{j-1}-1})^{\frac{s}{s-r}}+2\right)^{\frac{2}{l_j}} < \infty$ hence
\begin{equation}\label{Moserineq}
\|\oo h\|_\infty \leq \mathcal{A} \|\oo h\|^*_l
\end{equation}
for $l > 1$.  In particular, let $l=2$ so that
\begin{align*}
\sup h^+ \leq \|\overline{h}\|_\infty &\leq \mathcal{A}\|\overline{h}\|_{2}^* \\
& = \mathcal{A} \|h^+ + \|f\|^*_p\, \|_2^*\\
& \leq \mathcal{A}(\|h\|_2^* + \|\overline{w}(V-\tilde{\sigma})\|_p^*)\\
& \leq \mathcal{A}(K_1\sqrt{\epsilon} + 6(\tau -1)\epsilon)\\
&\leq \mathcal{A}(K_1 + 6(\tau -1))(\sqrt{\epsilon}+\epsilon) \equiv K_2 (\sqrt{\epsilon}+\epsilon).
\end{align*}
Here we have used Claim 1.  Since $-h$ satisfies the same equation (except for a sign in $f$), we conclude that
 \begin{equation*}
|w-\overline{w}|=|h|\leq K_2 (\sqrt{\epsilon}+\epsilon).
\end{equation*}

 \noindent{\bf Claim 3:} For $\epsilon$ small enough, $\overline{w}>\frac{1}{2}$.

From the previous claim, we know that $w\leq \overline{w}+K_2(\sqrt{\epsilon}+\epsilon)$. Since $\|w\|_2^* = 1$ and $\overline{w}\leq 1$,
\begin{equation*}
 1= \fint_M w^2 dv \leq \fint_M (\overline{w}+K_2(\sqrt{\epsilon}+ \epsilon))^2 dv = (\overline{w}+K_2(\sqrt{\epsilon}+ \epsilon))^2 \leq \overline{w}^2 +2K_2(\sqrt{\epsilon}+\epsilon)+ K_2^2(\sqrt{\epsilon} + \epsilon)^2.
\end{equation*}
Hence, choosing $\epsilon>0$ small enough
\begin{equation*}
 \frac{1}{4} < 1-2K_2(\sqrt{\epsilon}+\epsilon)-K_2^2(\sqrt{\epsilon}+\epsilon)^2 \leq \overline{w}^2,
\end{equation*}
so that $\overline{w}> \frac{1}{2}$.  This allows us to finish the proof of the lemma. Consider the function $w_2:= w/\overline{w}$. $w_2$ satisfies the same equation as $w$, and we know by claims 2 and 3 that
\begin{equation*}
1-\tilde{\delta}  \leq w_2 \leq  1+\tilde{\delta},
\end{equation*}
where $\tilde{\delta} := 2K_2(\sqrt{\epsilon}+\epsilon)$. Define $J:= w_2^{-\frac{1}{\tau-1}}$. We can establish the bounds for $J$ using the 1st order Taylor polynomial of $f(x) = x^{-\frac{1}{\tau-1}}$ near $x=1$, on the domain $(1-\tilde{\delta}, 1+\tilde{\delta})$. We know that $f(x) = 1+ R_1(x)$, where the remainder can be estimated by
\begin{equation*}
    |R_1| = |f'(x^*)(x-1)| \leq \frac{2\tilde{\delta}}{(\tau-1)(1-\tilde{\delta})^{\frac{\tau}{\tau-1}}},
  \end{equation*}
where $x^*\in  (1-\tilde{\delta}, 1+\tilde{\delta})$. Choosing $\epsilon>0$ small enough so that $\frac{2\tilde{\delta}}{(\tau-1)(1-\tilde{\delta})^{\frac{\tau}{\tau-1}}}\leq \delta$, we get the estimate
\begin{equation*}
  |J-1| \leq \delta,
\end{equation*}
concluding the proof of the proposition.
\end{proof}

\section{Sharp gradient estimate}\label{comparison}
As in the $\Ric_M \geq 0$ case, to obtain a sharp estimate we need to consider the following ODE with an additional parameter $\eta \in \mathbb{R}$,
\begin{equation}\label{ODE}
\begin{cases}
(1-u^2)Z''(u) +  uZ'(u) = -\eta u & \text{ on } [-1,1]\\
Z(0) = 0\\
Z(\pm 1) = 0,
\end{cases}
\end{equation}
which has the explicit solution
\begin{align*}
Z(u) = \frac{2\eta}{\pi}\left(\arcsin(u) + u\sqrt{1-u^2}\right)  -\eta u.
\end{align*}
Furthermore, the function $Z(u)$ satisfies the following inequalities.
\begin{proposition}\label{ODEprop}
For numbers $\eta = 1+\delta$ and $J\leq \eta$, we have
\begin{equation}\label{ODEineq1}
\eta^{-1}(Z')^2 - 2J^{-1}Z''Z+Z' \geq 0,
\end{equation}
\begin{equation}\label{ODEineq2}
2Z-uZ'+1 \geq 1 -\eta, \quad u \in [-1,1],
\end{equation}
\begin{equation}\label{ODEineq3}
\eta(1-u^2) \geq 2|Z|.
\end{equation}
\end{proposition}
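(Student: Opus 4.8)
The plan is to reduce all three inequalities to elementary one‑variable estimates, using the explicit derivatives of the closed form of $Z$. Differentiating gives
\[
Z'(u)=\eta\Bigl(\tfrac4\pi\sqrt{1-u^2}-1\Bigr),\qquad Z''(u)=-\frac{4\eta}{\pi}\,\frac{u}{\sqrt{1-u^2}},
\]
so in particular $\eta+Z'(u)=\frac{4\eta}{\pi}\sqrt{1-u^2}\ge 0$. It is convenient to write $Z(u)=\eta\,\phi(u)$ with $\phi(u)=\frac2\pi\bigl(\arcsin u+u\sqrt{1-u^2}\bigr)-u$; then $\phi$ is odd, $\phi(0)=\phi(\pm1)=0$, and $\phi'(u)=\frac4\pi\sqrt{1-u^2}-1$ is strictly decreasing on $[0,1]$ and vanishes exactly once there, so $\phi$ is unimodal on $[0,1]$ with $\phi\ge 0$ on $[0,1]$ (hence $\phi\le 0$ on $[-1,0]$). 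Thus $u\phi(u)\ge 0$ and $Z(u)$ has the same sign as $u$ on $[-1,1]$. All three inequalities are even or odd in $u$, so it suffices to treat $u\in[0,1]$; the endpoints $u=\pm1$ then follow by continuity, noting $Z''Z\to 0$ since $\phi(u)=O(1-u)$ near $u=\pm1$. Throughout we use $J>0$, which holds in our application where $J=w_2^{-1/(\tau-1)}>0$.

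For \eqref{ODEineq2}, a direct computation cancels the $u\sqrt{1-u^2}$ terms and yields $2Z(u)-uZ'(u)=\frac{4\eta}{\pi}\arcsin u-\eta u$, so \eqref{ODEineq2} is equivalent to $\chi(u):=1+\frac4\pi\arcsin u-u\ge 0$. Since $\chi(-1)=0$ and $\chi'(u)=\frac4\pi(1-u^2)^{-1/2}-1\ge\frac4\pi-1>0$, $\chi$ is increasing and hence nonnegative on $[-1,1]$. For \eqref{ODEineq3}, since $|Z|=\eta|\phi|$ with $1-u^2\ge 0$ and $\phi\ge 0$ on $[0,1]$, the bound $\eta(1-u^2)\ge-2Z$ is automatic, so it remains to prove $\Psi(u):=1-u^2-2\phi(u)\ge 0$ on $[0,1]$. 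One computes
\[
\Psi'(u)=2(1-u)-\frac8\pi\sqrt{1-u^2}=\sqrt{1-u}\,\Bigl(2\sqrt{1-u}-\tfrac8\pi\sqrt{1+u}\Bigr),
\]
which is $\le 0$ on $[0,1]$ because $\pi^2<16$ gives $\frac{\pi^2}{16}(1-u)\le\frac{\pi^2}{16}<1\le 1+u$ there. Hence $\Psi$ is nonincreasing on $[0,1]$, so $\Psi(u)\ge\Psi(1)=0$.

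For \eqref{ODEineq1}, observe that $Z''(u)$ has the sign opposite to $u$ while $Z(u)$ has the sign of $u$, so $-2Z''(u)Z(u)\ge 0$ on $(-1,1)$; since $J>0$ and $J\le\eta$ this gives $-2J^{-1}Z''Z\ge -2\eta^{-1}Z''Z$, and it therefore suffices to prove the case $J=\eta$, i.e. $(Z')^2+\eta Z'-2Z''Z\ge 0$. Substituting the formulas for $Z',Z''$ and $Z=\eta\phi$ and collecting the $(1-u^2)^{\pm1/2}$ and $(1-u^2)^{3/2}$ contributions, one finds
\[
(Z')^2+\eta Z'-2Z''Z=\frac{4\eta^2}{\pi^2\sqrt{1-u^2}}\,G(u),\qquad G(u):=4\sqrt{1-u^2}-\pi(1+u^2)+4u\arcsin u .
\]
Here $G$ is even with $G(1)=0$, and $G'(u)=4\arcsin u-2\pi u$, which is $\le 0$ on $[0,1]$ because $\arcsin$ is convex on $[0,1]$ and hence lies below the chord $u\mapsto\frac\pi2 u$ joining $(0,0)$ to $(1,\frac\pi2)$. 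Thus $G$ is nonincreasing on $[0,1]$, so $G\ge G(1)=0$ on $[0,1]$, and by evenness on all of $[-1,1]$; this proves \eqref{ODEineq1}.

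I expect the main obstacle to be purely computational: the algebraic simplification producing $G(u)$ in the last step, where the half‑integer powers of $1-u^2$ must be combined carefully, together with keeping the sign bookkeeping straight in the reduction to $J=\eta$. Everything else amounts to checking that three explicit functions ($\chi$, $\Psi$, $G$) are monotone on the relevant interval with a known boundary value $0$, which is routine.
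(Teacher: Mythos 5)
Your proof is correct. The structural step is the same as the paper's: you dispose of the $J$-dependence by noting that $Z$ and $u$ share sign while $Z''$ has the opposite sign, so $-Z''Z\ge 0$, and then $0<J\le\eta$ lets you replace $J^{-1}$ by $\eta^{-1}$, reducing \eqref{ODEineq1} to the rescaled $\eta$-independent inequality; this is exactly how the paper argues. Where you genuinely diverge is in what you do with the remaining one-variable inequalities: the paper cites Li's book \cite{li} for the $\eta=1$ cases of \eqref{ODEineq1} and \eqref{ODEineq3} and dismisses \eqref{ODEineq2} as a direct computation, whereas you verify all three from scratch by exhibiting explicit functions $\chi$, $\Psi$, $G$ that are monotone with boundary value $0$. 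Your computations check out: $2Z-uZ'=\tfrac{4\eta}{\pi}\arcsin u-\eta u$, the factorization $(Z')^2+\eta Z'-2Z''Z=\tfrac{4\eta^2}{\pi^2\sqrt{1-u^2}}\,G(u)$ with $G(u)=4\sqrt{1-u^2}-\pi(1+u^2)+4u\arcsin u$ and $G'(u)=4\arcsin u-2\pi u$, and the chord/convexity bound $\arcsin u\le\tfrac{\pi}{2}u$ on $[0,1]$ are all right. What each approach buys: the paper's proof is shorter but rests on an external reference; yours makes the proposition self-contained, records explicitly the (implicit in the paper, but needed) hypothesis $J>0$, and treats carefully the endpoints $u=\pm1$, where $Z''Z$ is an indeterminate form that both arguments must interpret by continuity.
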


\begin{proof}
By direct computation,
\begin{align*}
Z'(u) &= -\eta + \frac{4\eta}{\pi}\sqrt{1-u^2},\\
Z''(u) &= -\frac{4\eta}{\pi}\frac{u}{\sqrt{1-u^2}}.
\end{align*}
We first show the inequality \eqref{ODEineq1}. It was shown in \cite{li} that the solution to \eqref{ODE} when $\eta =1$, namely, $z(u) = \frac{2}{\pi}(\arcsin(u)+u\sqrt{1-u^2})-u$ satisfies
\begin{align*}
(z')^2 - 2z'' z + z' \geq 0.
\end{align*}
For $\eta > 0$, the solution is simply given by rescaling $Z = \eta z$.  Note that $z,u$, and $-z''$ shares sign, so that $-zz'' \geq 0$ on $[-1,1]$.   Since $J\leq \eta$,
\begin{align*}
\eta^{-1}(Z')^2 - 2J^{-1}Z''Z+Z' &= \eta(z')^2-2J^{-1}\eta^2z''z+\eta z'\geq \eta((z')^2 - 2z'' z + z') \geq 0.
\end{align*}
Similarly, for \eqref{ODEineq3}, the case $\eta = 1$ was shown in \cite{li} and we obtain the result by rescaling.  Direct computation yields \eqref{ODEineq2}.
\end{proof}
We mention that in Proposition \ref{ODEprop}, $u$, $J$ are generic parameters, and $u$ is a variable.  In order to prove the main result, we need to make the following choice.

Let $\phi$ be a nontrivial eigenfunction corresponding to $\lambda_1$ and normalized so that for $0\leq a <1$, $a+1=\sup \phi$ and $a-1 = \inf \phi$.  Set $u:=\phi-a$, so that $\Delta u = -\lambda_1(u+ a)$.
\begin{proposition}\label{gradientestimate}
Let $\delta > 0$ be fixed.   Let $J$ be the solution \eqref{Jeqn} with $\tau = \frac{3+4\delta}{2\delta}$. Suppose $M$ is a closed Riemannian manifold with $\bar k(p,0) \leq \epsilon$ so that Proposition \ref{sbounds} and \ref{Jbounds} holds for this choice of $\delta$.  Then $u$ defined as above satisfies the gradient estimate
\begin{equation}\label{gradientestimate}
J|\nabla u|^2 \leq \tilde{\lambda}(1-u^2)+2a\lambda_1 Z(u),
\end{equation}
where $\tilde{\lambda} := C_1\lambda_1+C_2$ with
\begin{equation}\label{constant1}
C_1 := \frac{1+\delta +\sqrt{A}}{1-\sqrt{B}}
\end{equation}
and
\begin{equation}\label{constant2}
 C_2 := \frac{\sigma}{2(1-\sqrt{B})}\left(\frac{\tilde{Z}}{\sqrt{A}}+\frac{1}{2\sqrt{B}}\right),
\end{equation}
where $A = A(\delta) := 2\delta(1+\delta)$, $\displaystyle B = B(\delta) := \frac{\delta(5+\delta)}{(1-\delta)}$, and $Z(u)$ is defined as in \eqref{ODE} for $\eta = 1+\delta$ with $\displaystyle \sup_{[-1,1]} Z = \tilde{Z}\leq (0.116)\eta$, and $\sigma$ is given in Proposition \ref{sbounds}.
\end{proposition}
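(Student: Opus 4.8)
The plan is to run a maximum principle argument on the barrier function
\[
P := J|\nabla u|^2 - \tilde{\lambda}(1-u^2) - 2a\lambda_1 Z(u),
\]
modeled on Li \cite{li} but with $J|\nabla u|^2$ in place of $|\nabla u|^2$, exploiting the device of Zhang--Zhu \cite{ZhangZhu1,ZhangZhu2}. I want to show $P \le 0$ on $M$. Since $M$ is closed, $P$ attains a maximum at some $x_0$; assume for contradiction that $P(x_0) > 0$. First I would dispose of the degenerate cases: if $\nabla u(x_0) = 0$ then $P(x_0) = -\tilde{\lambda}(1-u^2) - 2a\lambda_1 Z(u)$, which is $\le 0$ by \eqref{ODEineq3} together with $\tilde{\lambda} \ge (1+\delta)\lambda_1 \ge a\lambda_1\eta$ (note $C_1 \ge 1+\delta$ from \eqref{constant1}); likewise if $u(x_0) = \pm 1$ then $x_0$ is an interior extremum of $u$, so $\nabla u(x_0)=0$ and the same bound applies. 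Hence $|\nabla u|(x_0)\ne 0$, $u(x_0)\in(-1,1)$, and at $x_0$ we may use $\nabla P = 0$ and $\Delta P \le 0$.

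Next I would carry out the Bochner computation. From $\Delta|\nabla u|^2 = 2|\Hess u|^2 + 2\langle\nabla u,\nabla\Delta u\rangle + 2\Ric(\nabla u,\nabla u)$, the eigenvalue equation $\Delta u = -\lambda_1(u+a)$ gives $\langle\nabla u,\nabla\Delta u\rangle = -\lambda_1|\nabla u|^2$, and $\Ric(\nabla u,\nabla u)\ge -\rho_0|\nabla u|^2$. Expanding $\Delta(J|\nabla u|^2) = J\Delta|\nabla u|^2 + 2\langle\nabla J,\nabla|\nabla u|^2\rangle + |\nabla u|^2\Delta J$ and substituting \eqref{Jeqn} in the form $\Delta J = \tau\frac{|\nabla J|^2}{J} + 2J\rho_0 - \sigma J$, the curvature contributions cancel exactly: the $-2J\rho_0|\nabla u|^2$ coming from Bochner is killed by the $+2J\rho_0|\nabla u|^2$ coming from $|\nabla u|^2\Delta J$. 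This is precisely the role of $J$. What survives is
\[
\Delta(J|\nabla u|^2) \ge 2J|\Hess u|^2 - (2\lambda_1+\sigma)J|\nabla u|^2 + 2\langle\nabla J,\nabla|\nabla u|^2\rangle + \tau\tfrac{|\nabla J|^2}{J}|\nabla u|^2 .
\]
Then I would refine the Hessian term by working in an orthonormal frame with $e_1 = \nabla u/|\nabla u|$, using $|\Hess u|^2 \ge u_{11}^2$ and the first-order identity $\nabla(J|\nabla u|^2) = \bigl(-2\tilde{\lambda}u + 2a\lambda_1 Z'(u)\bigr)\nabla u$ at $x_0$, which expresses $u_{11}$ and $\langle\nabla J,\nabla|\nabla u|^2\rangle$ in terms of $\nabla J$, $u$ and the parameters. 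All the $\nabla J$ contributions then assemble into a quadratic form in the components of $\nabla J$ whose sign is controlled because $\tau = \frac{3+4\delta}{2\delta} > 2$ (equivalently $\tau-2 = \frac{3}{2\delta}>0$ and $2\tau-3 = \frac{3+\delta}{\delta}>0$); minimizing over $\nabla J$ leaves a nonnegative remainder and yields the clean bound $\Delta(J|\nabla u|^2)(x_0) \ge \frac{\tau-2}{(2\tau-3)J}\bigl(2\tilde{\lambda}u - 2a\lambda_1 Z'(u)\bigr)^2 - (2\lambda_1+\sigma)J|\nabla u|^2$.

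I would then compute $\Delta P(x_0)$ using $\Delta(1-u^2) = -2|\nabla u|^2 + 2\lambda_1 u(u+a)$ and $\Delta Z(u) = Z''(u)|\nabla u|^2 - \lambda_1 Z'(u)(u+a)$, eliminate $Z''$ via the ODE \eqref{ODE}, insert the identity $J|\nabla u|^2 = \tilde{\lambda}(1-u^2) + 2a\lambda_1 Z(u) + P(x_0)$ with $P(x_0)>0$, and invoke $1-\delta \le J \le \eta$ and $0 \le \sigma \le 4\epsilon$ from Propositions \ref{sbounds} and \ref{Jbounds}. Dividing out $(1-u^2)$ (legitimate since $u(x_0)\ne\pm1$), the inequality $\Delta P(x_0)\le 0$ reduces to a one-variable inequality in $u\in(-1,1)$.

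The last step, which I expect to be the main obstacle, is to show that this one-variable inequality is violated, i.e. $\Delta P(x_0) > 0$, contradicting $\Delta P(x_0)\le 0$. Here I would feed in the three inequalities of Proposition \ref{ODEprop}: \eqref{ODEineq1} absorbs the square terms produced by $u_{11}$ and by the $Z', Z''$ contributions, \eqref{ODEineq2} controls the mismatch between $J$ and the ideal value $\eta$ (which can be as large as $2\delta$), and \eqref{ODEineq3}, together with the explicit bound $\tilde{Z} \le (0.116)\eta$, controls the term $-\sigma J|\nabla u|^2$. The several applications of Young's inequality needed to split the remaining cross terms — products of $u$ and $Z'(u)$ against the $O(\delta)$ and $O(\epsilon)$ error factors — are what introduce $A = 2\delta(1+\delta)$ and $B = \frac{\delta(5+\delta)}{1-\delta}$, and collecting everything yields a linear inequality of the shape $(1-\sqrt{B})\,\tilde{\lambda} \le (1+\delta+\sqrt{A})\lambda_1 + \frac{\sigma}{2}\bigl(\tilde{Z}/\sqrt{A} + 1/(2\sqrt{B})\bigr) - (\text{positive multiple of } P(x_0))$, which fails precisely for $\tilde{\lambda} = C_1\lambda_1 + C_2$ with $C_1, C_2$ as in \eqref{constant1}--\eqref{constant2}. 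Thus $P \le 0$ on $M$, which is the asserted gradient estimate \eqref{gradientestimate}. The delicate part throughout is the bookkeeping of error terms — keeping every term on the correct side and verifying that $P(x_0)>0$ enters with a favorable sign so that the contradiction is genuine; by contrast the curvature cancellation in the Bochner step and the handling of $\nabla J$ via $\tau>2$ are conceptually transparent.
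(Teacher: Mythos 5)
Your overall skeleton does match the paper's: a maximum-principle argument for $J|\nabla u|^2 - (\mathrm{const})(1-u^2) - 2a\lambda_1 Z(u)$, Bochner plus equation \eqref{Jeqn} to cancel the $\rho_0$-terms, the choice $\tau=\frac{3+4\delta}{2\delta}$ to absorb $\nabla J$, and the ODE comparison inequalities of Proposition \ref{ODEprop}; your ``clean bound'' $\Delta(J|\nabla u|^2)(x_0)\ge \frac{\tau-2}{(2\tau-3)J}\bigl(2\tilde{\lambda}u-2a\lambda_1 Z'\bigr)^2-(2\lambda_1+\sigma)J|\nabla u|^2$ is indeed correct. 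The genuine gap is in your contradiction scheme. The paper does not fix the constant in advance: it works with $u=\xi(\phi-a)$, $\xi<1$, and chooses $c$ so that $\max Q=0$, so at the maximum point it has the \emph{exact} identity \eqref{maxQeq}. Your version instead carries the unknown slack $P(x_0)>0$ through the substitution $J|\nabla u|^2=\tilde{\lambda}(1-u^2)+2a\lambda_1 Z+P(x_0)$. That slack gets multiplied by the full coefficient of $|\nabla u|^2$, which contains $-2a\lambda_1 Z''(u)$ with $Z''(u)=-\frac{4\eta}{\pi}\frac{u}{\sqrt{1-u^2}}$: this is sign-indefinite and unbounded as $u(x_0)\to\pm1$, and you have no control on $u(x_0)$ beyond $|u(x_0)|<1$. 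For $u(x_0)<0$ and $a>0$ the slack therefore enters with an arbitrarily large \emph{unfavorable} coefficient, so your claim that ``$P(x_0)>0$ enters with a favorable sign'' fails and the contradiction cannot be closed as described. The paper's two devices are exactly what remove this obstruction: the truncation $\xi<1$ keeps $Z''$ bounded on the range of $u$ and guarantees that a $c$ with $\max Q=0$ exists (and that the maximum point has $\nabla u\neq0$), while the normalization $\max Q=0$ makes the slack identically zero, so the $Z''$-terms survive only in the bounded combinations $(1-u^2)Z''$ and $Z''Z\le0$, which are then handled by the ODE \eqref{ODE} and \eqref{ODEineq1}. You would need to adopt this normalization (or a substitute giving an exact identity at the maximum point) rather than argue from $\max P>0$.

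Separately, the step that is supposed to produce the stated constants is only asserted. In the paper, $\sqrt{A}$ and $\sqrt{B}$ do not arise from Young splittings of cross terms: they come from assembling a quadratic inequality in $c-\lambda_1$, completing the square, and using $\sqrt{a^2+b^2}\le a+b$. Moreover the $|\nabla J|^2$ absorption is done with two \emph{different} Cauchy--Schwarz weights ($\delta$ on the $cu$ term and $\beta=\frac{2\delta}{1+\delta}$ on the $Z'$ term), which is precisely what leaves the coefficient $2-\beta=2\eta^{-1}$ on $(Z')^2$ needed to invoke \eqref{ODEineq1}, and the exact coefficient $-4$ on the cross term that combines with the ODE. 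Your single minimization over $\nabla J$ distributes the loss differently (coefficient $\frac{6}{3+\delta}$ on both squares and $-\frac{12}{3+\delta}$ on the cross term), so the subsequent bookkeeping is not the paper's; it plausibly yields constants of the same order as $\delta\to0$, but you have not shown it yields the specific $C_1$, $C_2$ of \eqref{constant1}--\eqref{constant2}, and this computation has to be carried out rather than expected.
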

\begin{remark}\label{constantremark}
Note that when $\delta \to 0$, then $C_1 \to 1$.  Also, when $\sigma/\sqrt{\delta} \to 0$, then $C_2 \to 0$.
\end{remark}

\begin{proof}
Consider $\xi < 1$, and denote, for simplicity, $u := \xi (\phi-a)$. Note that comparing with the previously defined $u$, there is an extra factor $\xi$ so that $-\xi \leq u \leq \xi$. Then this $u$ satisfies
\begin{equation}\label{xiueq}
\Delta u = -\lambda_1(u+\xi a).
\end{equation}
Consider for a constant $c$,
\begin{align*}
Q := J|\nabla u|^2-c(1-u^2)-2a\lambda_1 Z(u).
\end{align*}
To obtain \eqref{gradientestimate}, we want to show that $Q\leq 0$.  By \eqref{ODEineq3} and compactness of $M$, we can choose a suitable $c$ so that $Q=0$ at max point.  For the rest of the proof, we fix such a constant $c$ so that $\max Q=0$. 

Our goals is to show that $c\leq C_1\lambda_1+C_2$ for some constants $C_1,C_2$ such that $(C_1,C_2) \to (1,0)$ as $\delta\to 0$, $\sigma/\sqrt{\delta} \to 0$.  Taking $\xi \to 1$ will give us the gradient estimate.

If $c\leq (1+\delta)\lambda_1$, then we are done so we can assume that
\begin{equation}\label{cdeltaineq}
c > (1+\delta)\lambda_1,
\end{equation}
and in particular, $c > J\lambda_1$.  Let the maximum point of $Q$ be $x_0$.  Then $|\nabla u(x_0)| > 0$, since if $\nabla u(x_0) = 0$, then
\begin{align*}
0 &= -c(1-u^2(x_0)) - 2a\lambda_1 Z(u(x_0))\\
&\leq -c(1-\xi^2)+a\lambda_1 \eta(1-\xi^2) \\
&=(a\lambda_1 \eta-c)(1-\xi^2)<0,
\end{align*}
a contradiction.

For convenience, we write $Z = Z(u)$. By direct computation,
\begin{align*}
\nabla Q &= (\nabla J)|\nabla u|^2+J\nabla(|\nabla u|^2) +2cu\nabla u -2a\lambda_1 Z'\nabla u,
\end{align*}
which at the maximum point is
\begin{equation}\label{1storderQ}
\nabla(|\nabla u|^2) = -\frac{\nabla J}{J}|\nabla u|^2-2J^{-1}cu\nabla u + 2J^{-1}a\lambda_1 Z'\nabla u.
\end{equation}
Let $e_1 = \frac{\nabla u}{|\nabla u|}$ and complete to an orthonormal basis $\{e_j\}_{j=1}^n$.  Then at the maximal point we have
\begin{align*}
\Hess u(\nabla u,\nabla u)= -\frac{1}{2J}\langle \nabla J,\nabla u\rangle |\nabla u|^2-c\frac{u}{J}|\nabla u|^2 + \frac{a\lambda_1 Z'}{J}|\nabla u|^2
\end{align*}
so that
\begin{equation}\label{hess1}
\Hess u(e_1,e_1) = -\frac{1}{2J}\langle \nabla J,\nabla u\rangle - c\frac{u}{J}+\frac{a\lambda_1 Z'}{J},
\end{equation}
and for $j\neq 1$,
\begin{equation}\label{hessj}
\Hess u(e_1,e_j) = -\frac{1}{2J}\langle \nabla J,e_j\rangle|\nabla u|.
\end{equation}
Again by direct computation,
\begin{align*}
\Delta Q &= (\Delta J)|\nabla u|^2 + 2\langle \nabla J,\nabla (|\nabla u|^2)\rangle+J\Delta|\nabla u|^2\\
&\hspace{0.2 in}+2c|\nabla u|^2+2cu\Delta u -2a\lambda_1 Z''|\nabla u|^2 - 2a\lambda_1 Z'\Delta u.
\end{align*}
By Bochner formula and \eqref{xiueq},
\begin{align*}
\Delta Q &= (\Delta J)|\nabla u|^2 + 2\langle \nabla J,\nabla(|\nabla u|^2)\rangle \\
&\hspace{0.2 in}+ J\left(2|\Hess u|^2 -2\lambda_1|\nabla u|^2 +2\Ric(\nabla u,\nabla u)\right) \\
&\hspace{0.2 in}+ 2c|\nabla u|^2-2cu\lambda_1 (u+\xi a)-2a\lambda_1 Z''|\nabla u|^2+2a\lambda_1^2 Z'(u+\xi a).
\end{align*}
At the maximal point of $Q$, using $\Ric\geq -\rho_0$ and \eqref{1storderQ}, we have
\begin{align*}
0 &\geq (\Delta J)|\nabla u|^2 -2\frac{|\nabla J|^2}{J}|\nabla u|^2-4c\frac{u}{J}\langle \nabla J,\nabla u\rangle+4\frac{a\lambda_1 Z'}{J}\langle \nabla J,\nabla u\rangle \\
&\hspace{0.2 in} +2J|\Hess u|^2 - 2\lambda_1 J|\nabla u|^2-2J\rho_0|\nabla u|^2\\
&\hspace{0.2 in} + 2c|\nabla u|^2-2cu\lambda_1 (u+\xi a)-2a\lambda_1 Z''|\nabla u|^2+2a\lambda_1^2 Z'(u+\xi a).
\end{align*}
Using \eqref{hess1} and \eqref{hessj} we also have the lower bound of the Hessian term
\begin{align*}
|\Hess u|^2 &\geq \sum_{j=1}^n (\Hess u(e_1,e_j))^2\\
&=\frac{1}{4J^2}|\nabla J|^2|\nabla u|^2 +c\frac{u}{J^2}\langle \nabla J,\nabla u\rangle - \frac{a\lambda_1 Z'}{J^2}\langle \nabla J,\nabla u\rangle+c^2\frac{u^2}{J^2}-\frac{2cua\lambda_1 Z'}{J^2}+\frac{a^2\lambda_1^2(Z')^2}{J^2}.\\
\end{align*}
Inserting the Hessian lower bound we have
\begin{align}
\begin{split}\label{ineqwithoutcauchy}
0 &\geq \left((\Delta J) -\frac{3}{2}\frac{|\nabla J|^2}{J}-2J\rho_0\right)|\nabla u|^2-2c\frac{u}{J}\langle \nabla J,\nabla u\rangle+2\frac{a\lambda_1 Z'}{J}\langle \nabla J,\nabla u\rangle \\
&\hspace{0.2 in}   +\frac{2c^2u^2}{J}-\frac{4cua\lambda_1 Z'}{J}+\frac{2a^2\lambda_1^2(Z')^2}{J} - 2\lambda_1 J|\nabla u|^2\\
&\hspace{0.2 in} + 2c|\nabla u|^2-2cu\lambda_1 (u+\xi a)-2a\lambda_1 Z''|\nabla u|^2+2a\lambda_1^2 Z'(u+\xi a).
\end{split}
\end{align}
Let $\displaystyle \beta = \frac{2\delta}{1+\delta}$.  By Cauchy-Schwarz, we bound the mixed term as follows,
\begin{align*}
2\frac{a\lambda_1 Z'}{\sqrt{J}}\left\langle \frac{\nabla J}{\sqrt{J}},\nabla u\right\rangle \geq -\beta\frac{a^2\lambda_1^2 (Z')^2}{J}-\frac{|\nabla J|^2}{\beta J}|\nabla u|^2
\end{align*}
and
\begin{align*}
-2c\frac{u}{\sqrt{J}}\left\langle \frac{\nabla J}{\sqrt{J}},\nabla u\right\rangle \geq -\delta c^2\frac{u^2}{J}-\frac{|\nabla J|^2}{\delta J}|\nabla u|^2.
\end{align*}
Plugging these into \eqref{ineqwithoutcauchy} we deduce
\begin{align*}
0 &\geq \left((\Delta J) -\left(\frac{3+4\delta}{2\delta}\right)\frac{|\nabla J|^2}{J}-2J\rho_0\right)|\nabla u|^2\\
&\hspace{0.2 in}+(2-\delta)\frac{c^2u^2}{J} -\frac{4cua\lambda_1 Z'}{J} +(2-\beta)\frac{a^2\lambda_1^2(Z')^2}{J} - 2\lambda_1 J|\nabla u|^2\\
&\hspace{0.2 in} + 2c|\nabla u|^2-2cu\lambda_1 (u+\xi a)-2a\lambda_1 Z''|\nabla u|^2+2a\lambda_1^2 Z'(u+\xi a).
\end{align*}
Now using the fact that $Q=0$ at the maximal point, written explicitly
\begin{equation}\label{maxQeq}
|\nabla u|^2 = cJ^{-1}(1-u^2)+2a\lambda_1J^{-1}Z,
\end{equation}
we substitute to the second and third lines of the above so that
\begin{align*}
0 &\geq \left((\Delta J) -\left(\frac{3+4\delta}{2\delta}\right)\frac{|\nabla J|^2}{J}-2J\rho_0\right)|\nabla u|^2\\
&\hspace{0.2 in} -\frac{4cua\lambda_1 Z'}{J} - 2\lambda_1 (c(1-u^2)+2a\lambda_1 Z)\\
&\hspace{0.2 in} + 2cJ^{-1}(c(1-u^2)+2a\lambda_1 Z)-2cu\lambda_1 (u+\xi a)\\
&\hspace{0.2 in}-2a\lambda_1 Z''J^{-1}(c(1-u^2)+2a\lambda_1 Z)+2a\lambda_1^2 Z'(u+\xi a) \\
&\hspace{0.2 in}+(2-\delta)\frac{c^2u^2}{J} +(2-\beta)\frac{a^2\lambda_1^2(Z')^2}{J}.
\end{align*}
Using the equation \eqref{Jeqn} with $\tau = \frac{3+4\delta}{2\delta}$, substituting \eqref{maxQeq} again, and noting that $2-\beta = 2\eta^{-1}$,
\begin{align*}
0&\geq -\sigma(c(1-u^2)+2a\lambda_1 Z)\\
&+2a^2\lambda_1^2(\xi Z'-2J^{-1}Z''Z+\eta^{-1}J^{-1}(Z')^2))\\
&-2ac\lambda_1J^{-1}((1-u^2)Z''+uZ'+\xi uJ)\\
&+2a\lambda(cJ^{-1}-\lambda_1)(2Z-uZ')\\
&-2\lambda_1c+2c^2J^{-1}-2ac\lambda_1J^{-1}+2a\lambda_1^2\\
&-\delta J^{-1}c^2u^2+2ac\lambda_1J^{-1}-2a\lambda_1^2.
\end{align*}
After some re-arranging we have
\begin{align*}
0 &\geq 2a^2\lambda_1^2(\xi Z'-2J^{-1} Z''Z + \eta^{-1}(Z')^2)+2a^2\lambda_1^2\eta^{-1}(J^{-1}-1)(Z')^2\\
&\hspace{0.2 in}-2ac\lambda_1 J^{-1}((1-u^2)Z''+uZ'+\xi uJ)\\
&\hspace{0.2 in}+ 2a\lambda_1(cJ^{-1}-\lambda_1)(2Z-uZ' +1)\\
&\hspace{0.2 in}+2(cJ^{-1}-\lambda_1)(c-a\lambda_1) \\
&\hspace{0.2 in}+(c\sigma -c^2J^{-1}\delta)u^2-c\sigma -2a\sigma\lambda_1 Z.
\end{align*}
The first line is grouped by terms with $a^2\lambda_1^2$, then using \eqref{cdeltaineq}, we group terms as products of $cJ^{-1}-\lambda_1$ since it has a sign, the second line grouped from the remaining terms with a factor $ac\lambda_1$, and the last are remaining terms which would mostly be zero for the pointwise bounded case since $\sigma = 0$.  Using the ODE \eqref{ODE}, the inequalities \eqref{ODEineq1},\eqref{ODEineq2}, and \eqref{cdeltaineq}, we have
\begin{align*}
0 &\geq 2a^2\lambda_1^2 (\xi-1)Z'+2a^2\lambda_1^2\eta^{-1}(J^{-1}-1)(Z')^2\\
&\hspace{0.2 in}-2ac\lambda_1 J^{-1}(\xi J-\eta)u\\
&\hspace{0.2 in}+2a\lambda_1 (cJ^{-1}-\lambda_1)(1-\eta)\\
&\hspace{0.2 in}+2(cJ^{-1}-\lambda_1)(c-a\lambda_1)\\
&\hspace{0.2 in}-c\sigma - 2a\sigma \lambda_1 Z - c^2J^{-1}\delta.
\end{align*}
Since $\eta > 1$ and $u \geq -\xi\geq -1$, $-\eta \leq Z'\leq \eta(\frac{4}{\pi}-1)\leq \eta$, and using Proposition~\ref{Jbounds} we replace $J$ by either $1-\delta$ or $1+\delta$ appropriately, and noting that $2a^2\lambda^2\eta^{-1}(J^{-1}-1)(Z')^2 \geq -2\delta c^2$ we get
\begin{align*}
0 &\geq -2a^2\lambda_1^2\eta(1 - \xi) \\
&\hspace{0.2 in} -2ac\lambda_1 \left( \eta -\xi+\frac{\delta \eta}{1-\delta}\right)\\
&\hspace{0.2 in} -2a\lambda_1\left(c-\lambda_1+ \frac{c\delta }{1-\delta}\right)(\eta-1)\\
&\hspace{0.2 in} +2\left(c-\lambda_1 - \delta c\right)(c-a\lambda_1) \\
&\hspace{0.2 in}-c\sigma -2a\sigma \lambda_1 Z - \frac{c^2\delta}{1-\delta} -2\delta c^2.
\end{align*}
Recall that $\tilde{Z} := \sup Z = O(\eta)$. Our goal now is to obtain an quadratic inequality in terms of $(c-\lambda_1)$.  Using $\eta \geq 1$, we first rewrite the first term and split the remaining terms so that
\begin{align*}
0&\geq -2a^2\lambda_1^2\eta(\eta-\xi)\\
&-2ac\lambda_1(\eta-\xi)-2ac\lambda_1\frac{\delta \eta}{1-\delta}\\
&-2a\lambda_1(c-\lambda_1)(\eta-1)-2a\lambda_1\frac{c\delta}{1-\delta}(\eta-1)\\
&+2(c-\lambda_1)(c-a\lambda_1)-2\delta c(c-a\lambda_1)\\
&-c\sigma-2a\sigma\lambda_1\tilde{Z}-\frac{c^2\delta}{1-\delta}-2\delta c^2.
\end{align*}
 Since $0 \leq a < 1$ and noting that $\xi \leq 1$, $\eta \geq 1$, and $c \geq \lambda_1$, we replace $a$ by 1 or 0 according to the sign so that
\begin{align*}
0&\geq -2\lambda_1^2\eta(\eta-\xi)\\
&-2c\lambda_1(\eta-\xi)-2c\lambda_1\frac{\delta \eta}{1-\delta}\\
&-2\lambda_1(c-\lambda_1)(\eta-1)-2\lambda_1\frac{c\delta}{1-\delta}(\eta-1)\\
&+2(c-\lambda_1)^2-4\delta c^2\\
&-c\sigma-2\sigma\lambda_1\tilde{Z}-\frac{c^2\delta}{1-\delta}.
\end{align*}
Combining the first two terms, and adding and subtracting $\lambda_1$, we get
\begin{align*}
0&\geq -2\lambda_1(\eta-\xi)(c-\lambda+\lambda\eta+\lambda)-2c\lambda_1\frac{\delta\eta}{1-\delta}\\
&-2\lambda_1(c-\lambda_1)(\eta-1)-2\lambda_1\frac{c\delta}{1-\delta}(\eta-1) \\
&+2(c-\lambda_1)^2-4\delta c^2-c\sigma-2\sigma\lambda_1\tilde{Z}-\frac{c^2\delta}{1-\delta}.
\end{align*}
After further rearranging we have,
\begin{align*}
0 &\geq -2\lambda_1^2(\eta - \xi)(\eta + 1)+2\left(c-\lambda_1\right)^2 - 4\delta c^2 -c\sigma -2\sigma \lambda_1 \tilde{Z} -\frac{c^2\delta}{1-\delta}\\
&\hspace{0.2 in} -2\lambda_1\left(c-\lambda_1\right)(2\eta-\xi-1) -2\lambda_1 \left( \frac{c\delta }{1-\delta}\right)(2\eta-1).
\end{align*}
Completing the square in terms of $(c-\lambda_1)$, we get
\begin{align*}
\left( (c-\lambda_1) -\frac{\lambda_1}{2}(2\eta-\xi-1)\right)^2 &\leq \lambda_1^2\frac{4(\eta-\eps)(\eta+1)+(2\eta-\xi-1)^2}{4}+\sigma\lambda_1 \tilde{Z}\\
&\hspace{0.2 in}+c\lambda_1\left( \frac{\delta}{1-\delta}\right)(2\eta -1)+c^2\left(\frac{\delta(4-\delta)}{(1-\delta)}\right)+c\frac{\sigma}{2}.
\end{align*}
Using $\lambda_1 \leq c$ we get
\begin{align*}
\left( (c-\lambda_1) -\frac{\lambda_1}{2}(2\eta-\xi-1)\right)^2 &\leq \lambda_1^2\frac{4(\eta-\xi)(\eta+1)+(2\eta-\xi-1)^2}{4}+\sigma\lambda_1 \tilde{Z}\\
&\hspace{0.2 in}+c^2\left(\left( \frac{\delta}{1-\delta}\right)(2\eta -1)+\frac{\delta(4-\delta)}{(1-\delta)}\right)+c\frac{\sigma}{2}.
\end{align*}
Let
\begin{align*}
A &= A(\eta,\xi) :=\frac{4(\eta-\xi)(\eta+1)+(2\eta-\xi-1)^2}{4}, \\
B &= B(\delta) := \left(\frac{\delta(5+\delta)}{(1-\delta)}\right).
\end{align*}
Then
\begin{align*}
\left( (c-\lambda_1) -\frac{\lambda_1}{2}(2\eta-\xi-1)\right)^2 &\leq A\left(\lambda_1 + \frac{\sigma \tilde{Z}}{2A}\right)^2 - \frac{\sigma^2\tilde{Z}^2}{4A} + B\left(c+\frac{\sigma}{4B}\right)^2-\frac{\sigma^2}{16B} \\
&\leq A\left(\lambda_1 + \frac{\sigma \tilde{Z}}{2A}\right)^2 + B\left(c+\frac{\sigma}{4B}\right)^2.
\end{align*}
Using the inequality $\sqrt{a^2+b^2} \leq a +b$, for $a,b \geq 0$,
\begin{align*}
c-\lambda_1 \leq \frac{\lambda_1}{2}(2\eta-\xi-1) + \sqrt{A}\left(\lambda_1 + \frac{\sigma\tilde{Z}}{2A}\right) + \sqrt{B}\left(c+\frac{\sigma}{4B}\right).
\end{align*}
Letting $\xi \to 1$, we have
\begin{align*}
 c\leq \frac{\lambda_1}{1-\sqrt{B}}\left(\eta + \sqrt{A}+\frac{\sigma\tilde{Z}}{2\lambda_1\sqrt{A}}\right) + \left(\frac{\sigma}{4\sqrt{B}(1-\sqrt{B})}\right)
\end{align*}
recalling that $\eta=1+\delta$ so that $A := A(1+\delta,1) = 2\delta(1+\delta)$.
\end{proof}

\section{Proof of Theorem \ref{main1}}\label{proofmainthm}
We now give the sharp eigenvalue lower bound.  By Proposition \ref{gradientestimate} we have
\begin{align*}
J|\nabla u|^2 \leq (C_1\lambda_1 +C_2)(1-u^2)+2a\lambda_1 Z(u).
\end{align*}
By \eqref{roughbound}, we have a rough lower bound of the first eigenvalue, $\lambda_1 \geq \Lambda_{\text{rough}} > 0$ so that
\begin{align*}
\lambda_1 \geq \frac{J|\nabla u|^2}{(C_1+C_2\Lambda_{\text{rough}}^{-1})(1-u^2)+2aZ(u)}.
\end{align*}
Let $b := C_1+C_2\Lambda_{\text{rough}}^{-1}$ and let $\gamma$ be the shortest geodesic connecting the minimum and maximum point of $u$ with length at most $D$.  Recalling that $\max u = 1$ and $\min u=-1$ from the construction given above Proposition \ref{gradientestimate}, integrating the gradient estimate along the geodesic and using change of variables $x(s) = u(\gamma(s))$ and that $Z$ is odd,
\begin{align*}
D\sqrt{\lambda_1} &\geq \sqrt{\lambda_1}\int_{\gamma}ds\\
&\geq (1-\delta)\int_{\gamma} \frac{|\nabla u|ds}{\sqrt{b(1-u^2)+2aZ(u)}}\\
&=(1-\delta)\int_{-1}^1 \frac{dx}{\sqrt{b(1-x^2)+2aZ(x)}}\\
&=(1-\delta)\int_0^1\left(\frac{1}{\sqrt{b(1-x^2)+2aZ(x)}}+\frac{1}{\sqrt{b(1-x^2)-2aZ(x)}}\right)dx\\
&= \frac{1-\delta}{\sqrt{b}}\int_0^1\frac{1}{\sqrt{1-x^2}}\left(\frac{1}{\sqrt{1 + \frac{2aZ(x)}{b(1-x^2)}}} + \frac{1}{\sqrt{1 - \frac{2aZ(x)}{b(1-x^2)}}}\right)dx \\
&\geq \frac{1-\delta}{\sqrt{b}}\int_0^1\frac{1}{\sqrt{1-x^2}} \left(2 +\frac{3a^2(Z(x))^2}{b^2(1-x^2)^2}\right)dx\\
&\geq \frac{1-\delta}{\sqrt{b}}\pi,
\end{align*}
so that
\begin{align*}
\lambda_1 \geq \frac{(1-\delta)^2}{b}\frac{\pi^2}{D^2}.
\end{align*}
Recall in the limiting case (c.f. Remark \ref{constantremark}) that $b\to 1$ as $\delta\to 0$ and $\sigma/\sqrt{\delta} \to 0$.   Let $\displaystyle \alpha := \frac{(1-\delta)^2}{b}$ to obtain the result.

\section{Appendix: estimate of $\epsilon$}\label{appendix}

 In this appendix we will give explicit bounds for $\epsilon$ depending on $p$, $n$, $D$, and in terms of the Sobolev $C_s$ and Poincar\'e $\Lambda_{rough}^{-1}$ constants, which have explicit expressions in \cite{Gallot1988}. Note that the Sobolev and Poincar\'e constants can be estimated and do not change for $\epsilon$ smaller than some fixed number.   We show that it suffices to choose
\begin{equation}\label{eps0}
\epsilon < \min \Bigg\{ (n-1) \left( \frac{1}{BD}\ln \left(1 + \frac{1}{2^{p+1}} \right) \right)^2, \frac{\delta}{12C_s^2(3+2\delta)}, \left( \frac{\sqrt{7}-2}{K_2} \right)^2,\frac{1}{8K_2\left( \frac{4}{3+2\delta}\right)^{\frac{9+6\delta}{3+2\delta}}}\Bigg\}
\end{equation}
 where  $B(p,n) = \left(\frac{2p-1}{p}\right)^{\frac{1}{2}}(n-1)^{1-\frac{1}{2p}}\left( \frac{2p-2}{2p-n} \right)^{\frac{p-1}{2p}}$, $K_2 = \mathcal{A}(K_1+\frac{9+6\delta}{\delta})$, $K_1 = \sqrt{6\Lambda_{rough}^{-1} \left(2+\frac{3}{\delta} \right)}$ and $\mathcal{A}(n,p,D)$ is the constant that appears from Moser's iteration \eqref{Moserineq}.

 It suffices to choose $\epsilon$ smaller than the worst of the following conditions:

\begin{enumerate}
 \item To apply the Sobolev inequalities that follow from Theorem \ref{Gallot}, $\epsilon$ needs to be small enough so that the theorem holds. Using our notation, the condition that needs to be satisfied in \cite[Theorem 3, 6]{Gallot1988} is
 \begin{equation}\label{condition}
\fint_M \left( \frac{\rho_0}{(n-1)\tilde{\alpha}^2}-1 \right)_+^p dv \leq \frac{1}{2}\left(e^{B\tilde{\alpha} D} -1 \right)^{-1}
 \end{equation}
 for some $\tilde{\alpha}>0$.  Multiplying \eqref{condition} by $\tilde{\alpha}^{2p}$ and raising both sides to the power $1/p$ we get
 $$\left( \fint_M \left(\frac{\rho_0}{n-1}-\tilde{\alpha}^2 \right)_+^{p} dv \right)^{\frac{1}{p}} \leq \frac{\tilde{\alpha}^2}{2^{\frac{1}{p}}} (e^{B\tilde{\alpha} D}-1)^{-\frac{1}{p}}.$$
 Note that
 $$\left( \fint_M \left( \frac{\rho_0}{n-1} -\tilde{\alpha}^2\right)_+^p dv \right)^{\frac{1}{p}} \leq \left\|\frac{\rho_0}{n-1}-\tilde{\alpha}^2\right\|_p^* \leq \frac{\bar k(p,0)}{n-1} + \tilde{\alpha}^2 \leq \frac{\epsilon}{n-1}+\tilde{\alpha}^2.$$
 Thus, it suffices to impose that for some fixed $\tilde{\alpha}>0$ we have
 $$\frac{\epsilon}{n-1} + \tilde{\alpha}^2 \leq \frac{\tilde{\alpha}^2}{2^{\frac{1}{p}}} (e^{B\tilde{\alpha} D}-1)^{-\frac{1}{p}},$$
 or equivalently
 $$\epsilon \leq (n-1)\tilde{\alpha}^2 \left(\frac{1}{2^\frac{1}{p} (e^{B\tilde{\alpha} D}-1)^{\frac{1}{p}}} -1 \right).$$
 So choosing $\tilde{\alpha} = \frac{1}{BD}\ln \left( 1+\frac{1}{2^{p+1}} \right)$ we obtain
\begin{equation}\label{eps1}
\epsilon \leq (n-1) \left( \frac{1}{BD}\ln \left(1 + \frac{1}{2^{p+1}} \right) \right)^2
\end{equation}
 \item In Proposition \ref{sbounds} and in Claim 1 of Proposition \ref{Jbounds} we need $\epsilon$ to satisfy
 \begin{equation}\label{eps2}
 \epsilon \leq \frac{1}{24 C_s^2(\tau -1)},
 \end{equation}
where $C_s$ is the Sobolev constant of \eqref{Sobolev}. From the proof of Proposition \ref{gradientestimate}, $\tau = \frac{3+4\delta}{2\delta}$ so we get
$$\epsilon < \frac{\delta}{12C_s^2(3+2\delta)}.$$

 \item In Claim 3 of Proposition \ref{Jbounds} we need $\epsilon$ to satisfy
 $$\frac{1}{4} < 1-2K_2(\sqrt{\epsilon}+\epsilon)-K_2^2(\sqrt{\epsilon }+\epsilon)^2.$$
This implies that
$$\epsilon < \left( -\frac{1}{2} + \sqrt{\frac{1}{4}+\frac{\sqrt{7}-2}{2K_2}} \right)^2.$$
To get a cleaner estimate, notice that since $\mathcal{A}>1$, then $K_2>6$, thus $\frac{\sqrt{7}-2}{2K_2}<\frac{\sqrt{7}-2}{12} < \frac{3}{4}$. Using that for $0<x<3$ we have that $-1+\sqrt{1 + x} > x$, letting $x = \frac{2(\sqrt{7}-2)}{K_2}$, we get
\begin{equation}\label{eps3}
\epsilon < \left( \frac{\sqrt{7}-2}{K_2} \right)^2.
\end{equation}

 \item In Claim 3 of Proposition \ref{Jbounds} we also need $\epsilon$ to be small enough so that
 $$\frac{2\tilde{\delta}}{(\tau -1)(1-\tilde{\delta})^{\frac{\tau}{\tau -1}}} \leq \delta,$$
where $\tilde{\delta} = 2K_2(\sqrt{\epsilon}+\epsilon)$. This condition is equivalent to
$$\left(  \frac{1}{\tilde{\delta}} -1  \right) \tilde{\delta}^{\frac{1}{\tau}} \geq \left( \frac{2}{\delta (\tau -1)} \right)^{\frac{\tau -1}{\tau}} = \left( \frac{4}{3+2\delta} \right)^{\frac{3+2\delta}{3+4\delta}} \equiv C_3(\delta).$$
Note that since $\delta <1$, we have that $\tau >\frac{3}{2}$. Then we get that $\tilde{\delta}^\frac{1}{\tau} > \tilde{\delta}^{\frac{2}{3}}$. It suffices to choose $\epsilon$ small enough so that
$$\left(  \frac{1}{\tilde{\delta}} -1  \right) \tilde{\delta}^{\frac{2}{3}} \geq C_3.$$
Notice that if $\delta< \frac{1}{2}$, then $C_3 >1$. Thus, choosing $\tilde{\delta}< \frac{1}{8C_3^3}$ we get that
$$\left(  \frac{1}{\tilde{\delta}} -1  \right) \tilde{\delta}^{\frac{2}{3}} > (2C_3)^3-\frac{1}{(2C_3)^2} > C_3+7C_3^3-\frac{1}{4C_3^2} >C_3+7-\frac{1}{4}>C_3,$$
so the condition is satisfied. This gives us the last condition
$$\epsilon <-\frac{1}{2}+\sqrt{\frac{1}{4}+\frac{1}{16K_2 \left( \frac{4}{3+2\delta}\right)^{\frac{9+6\delta}{3+2\delta}}}}.$$
As above, to get a cleaner estimate, notice that if $\delta <\frac{1}{2}$ then $$\frac{1}{16K_2 \left( \frac{4}{3+2\delta}\right)^{\frac{9+6\delta}{3+2\delta}}} < \frac{1}{16K_2} < \frac{1}{96} <\frac{3}{4},$$ so using the same property as in (3) we get that it suffices to assume
\begin{equation}\label{eps4}
\epsilon < \frac{1}{8K_2\left( \frac{4}{3+2\delta}\right)^{\frac{9+6\delta}{3+2\delta}}}.
\end{equation}
From \eqref{eps1},\eqref{eps2},\eqref{eps3},and \eqref{eps4}, we arrive at \eqref{eps0}.

\end{enumerate}

\begin{bibdiv}
\begin{biblist}
	\bib{Andrews-Clutterbuck2013}{article}{
		author={Andrews, Ben},
		author={Clutterbuck, Julie},
		title={Sharp modulus of continuity for parabolic equations on manifolds
			and lower bounds for the first eigenvalue},
		journal={Anal. PDE},
		volume={6},
		date={2013},
		number={5},
		pages={1013--1024},
		issn={2157-5045},
		review={\MR{3125548}},
		review={Zbl 1282.35099},
	}

	\bib{Andrews-Ni2012}{article}{
	author={Andrews, Ben},
	author={Ni, Lei},
	title={Eigenvalue comparison on Bakry-Emery manifolds},
	journal={Comm. Partial Differential Equations},
	volume={37},
	date={2012},
	number={11},
	pages={2081--2092},
	issn={0360-5302},
	review={\MR{3005536}},
	review={Zbl 1258.35153},
}

		\bib{Aubry2007}{article}{
		author={Aubry, Erwann},
		title={Finiteness of $\pi_1$ and geometric inequalities in almost
			positive Ricci curvature},
		language={English, with English and French summaries},
		journal={Ann. Sci. \'Ecole Norm. Sup. (4)},
		volume={40},
		date={2007},
		number={4},
		pages={675--695},
		issn={0012-9593},
		review={\MR{2191529}},
		%doi={10.1016/j.ansens.2007.07.001},
	}

	\bib{Bakry-Qian2000}{article}{
	author={Bakry, Dominique},
	author={Qian, Zhongmin},
	title={Some new results on eigenvectors via dimension, diameter, and
		Ricci curvature},
	journal={Adv. Math.},
	volume={155},
	date={2000},
	number={1},
	pages={98--153},
	issn={0001-8708},
	review={\MR{1789850}},
	review={Zbl 0980.58020},
}

	\bib{Blacker-Seto}{article}{
	author={Blacker, Casey},
	author={Seto, Shoo},
	title={First eigenvalue of the p-Laplacian on Kahler manifolds},
	journal={to appear in Proceedings of the AMS},
	eprint={arXiv:1804.10876},
}

\bib{Carron}{article}{
  title={Geometric inequalities for manifolds with Ricci curvature in the Kato class},
  author={Carron, Gilles},
  eprint={arXiv:1612.03027},
}

\bib{Cheeger}{article}{
   author={Cheeger, Jeff},
   title={A lower bound for the smallest eigenvalue of the Laplacian},
   conference={
      title={Problems in analysis},
      address={Papers dedicated to Salomon Bochner},
      date={1969},
   },
   book={
      publisher={Princeton Univ. Press, Princeton, N. J.},
   },
   date={1970},
   pages={195--199},
   review={\MR{0402831}},
}
	\bib{Chen-Wang1997}{article}{
	author={Chen, Mufa},
	author={Wang, Fengyu},
	title={General formula for lower bound of the first eigenvalue on
		Riemannian manifolds},
	journal={Sci. China Ser. A},
	volume={40},
	date={1997},
	number={4},
	pages={384--394},
	issn={1006-9283},
	review={\MR{1450586}},
	review={Zbl 0895.58056},
}

\bib{DaiWeiZhang}{article}{
   author={Dai, Xianzhe},
   author={Wei, Guofang},
   author={Zhang, Zhenlei},
   title={Local Sobolev constant estimate for integral Ricci curvature
   bounds},
   journal={Adv. Math.},
   volume={325},
   date={2018},
   pages={1--33},
   issn={0001-8708},
   review={\MR{3742584}},
   doi={10.1016/j.aim.2017.11.024},
}

\bib{Gallot1988}{article}{
	author={Gallot, Sylvestre},
	title={Isoperimetric inequalities based on integral norms of Ricci
	curvature},
	note={Colloque Paul L\'evy sur les Processus Stochastiques (Palaiseau,
	1987)},
	journal={Ast\'erisque},
	number={157-158},
	date={1988},
	pages={191--216},
	issn={0303-1179},
	review={\MR{976219}},
}

\bib{HanLin}{book}{
   author={Han, Qing},
   author={Lin, Fanghua},
   title={Elliptic Partial Differential Equations},
   series={Courant Lecture Notes},
   volume={1},
   publisher={American Mathematical Society},
   date={2011},
   isbn={978-0-8218-5313-9},
   pages={x+147},
   review={\MR{1669352}},
}

\bib{HangWang}{article}{
   author={Hang, Fengbo},
   author={Wang, Xiaodong},
   title={A remark on Zhong-Yang's eigenvalue estimate},
   journal={Int. Math. Res. Not. IMRN},
   date={2007},
   number={18},
   pages={Art. ID rnm064, 9},
   issn={1073-7928},
   review={\MR{2358887}},
   doi={10.1093/imrn/rnm064},
}
\bib{Kroger1992}{article}{
	author={Kr{\"o}ger, Pawel},
	title={On the spectral gap for compact manifolds},
	journal={J. Differential Geom.},
	volume={36},
	date={1992},
	number={2},
	pages={315--330},
	issn={0022-040X},
	review={\MR{1180385}},
	review={Zbl 0738.58048},
}

\bib{li}{book}{
   author={Li, Peter},
   title={Geometric analysis},
   series={Cambridge Studies in Advanced Mathematics},
   volume={134},
   publisher={Cambridge University Press, Cambridge},
   date={2012},
   pages={x+406},
   isbn={978-1-107-02064-1},
   review={\MR{2962229}},
   doi={10.1017/CBO9781139105798},
}

\bib{LiYau}{article}{
   author={Li, Peter},
   author={Yau, Shing Tung},
   title={Estimates of eigenvalues of a compact Riemannian manifold},
   conference={
      title={Geometry of the Laplace operator},
      address={Proc. Sympos. Pure Math., Univ. Hawaii, Honolulu, Hawaii},
      date={1979},
   },
   book={
      series={Proc. Sympos. Pure Math., XXXVI},
      publisher={Amer. Math. Soc., Providence, R.I.},
   },
   date={1980},
   pages={205--239},
   review={\MR{573435}},
}

\bib{Lichnerowicz}{book}{
   author={Lichnerowicz, Andr\'{e}},
   title={G\'{e}om\'{e}trie des groupes de transformations},
   language={French},
   publisher={Travaux et Recherches Math\'{e}matiques, III. Dunod, Paris},
   date={1958},
   pages={ix+193},
   review={\MR{0124009}},
}

\bib{Obata1962}{article}{
	author={Obata, Morio},
	title={Certain conditions for a Riemannian manifold to be isometric with a sphere},
	journal={J. Math. Soc. Japan},
	volume={14},
	date={1962},
	pages={333--340},
	issn={0025-5645},
	%		review={\MR{0142086}},
	%		doi={10.2969/jmsj/01430333},
}

\bib{Petersen-Sprouse1998}{article}{
   author={Petersen, Peter},
   author={Sprouse, Chadwick},
   title={Integral curvature bounds, distance estimates and applications},
   journal={J. Differential Geom.},
   volume={50},
   date={1998},
   number={2},
   pages={269--298},
   issn={0022-040X},
   review={\MR{1684981}},
}

\bib{PetersenWei}{article}{
   author={Petersen, P.},
   author={Wei, G.},
   title={Relative volume comparison with integral curvature bounds},
   journal={Geom. Funct. Anal.},
   volume={7},
   date={1997},
   number={6},
   pages={1031--1045},
   issn={1016-443X},
   review={\MR{1487753}},
   doi={10.1007/s000390050036},
}

\bib{Ramos}{article}{
   author={Ramos Oliv\'{e}, Xavier},
   title={Neumann Li-Yau gradient estimate under integral Ricci curvature
   bounds},
   journal={Proc. Amer. Math. Soc.},
   volume={147},
   date={2019},
   number={1},
   pages={411--426},
   issn={0002-9939},
   review={\MR{3876759}},
   doi={10.1090/proc/14213},
}

\bib{Rose2018}{article}{
	author={Rose, Christian},
	title={Li–Yau gradient estimate for compact manifolds with negative part of Ricci curvature in the Kato class},
	journal={Ann Glob Anal Geom},
	date={2018},
	eprint={ https://doi.org/10.1007/s10455-018-9634-0},
}

%\bib{Rose}{thesis}{
%  author={Rose, Christian},
%  title={Heat kernel estimates based on Ricci curvature integral bounds},
%  journal={Universit\"atsverlag der Technischen Universit\"at Chemnitz},
%  organization={Technischen Universit\"at Chemnitz},
%  date={2017},
%  pages={114},
%  eprint={http://nbn-resolving.de/urn:nbn:de:bsz:ch1-qucosa-228681},
%}

\bib{Sakai}{article}{
   author={Sakai, Takashi},
   title={Curvature---up through the twentieth century, and into the future?
   [translation of S\={u}gaku {\bf 54} (2002), no. 3, 292--307; MR1929898]},
   note={Sugaku Expositions},
   journal={Sugaku Expositions},
   volume={18},
   date={2005},
   number={2},
   pages={165--187},
   issn={0898-9583},
   review={\MR{2182883}},
}

\bib{Seto-Wei2017}{article}{
	author={Seto, Shoo},
	author={Wei, Guofang},
	title={First eigenvalue of the $p$-Laplacian under integral curvature
		condition},
	journal={Nonlinear Anal.},
	volume={163},
	date={2017},
	pages={60--70},
	issn={0362-546X},
	%	review={\MR{3695968}},
	%	doi={10.1016/j.na.2017.07.007},
}

\bib{Shi-Zhang}{article}{
   author={Shi, Yu Min},
   author={Zhang, Hui Chun},
   title={Lower bounds for the first eigenvalue on compact manifolds},
   language={Chinese, with English and Chinese summaries},
   journal={Chinese Ann. Math. Ser. A},
   volume={28},
   date={2007},
   number={6},
   pages={863--866},
   issn={1000-8314},
   review={\MR{2396231}},
}

\bib{Urakawa1987}{article}{
	author={Urakawa, Hajime},
	title={Stability of harmonic maps and eigenvalues of the Laplacian},
	journal={Trans. Amer. Math. Soc.},
	volume={301},
	date={1987},
	number={2},
	pages={557--589},
	issn={0002-9947},
	review={\MR{882704}},
	doi={10.2307/2000659},
}

\bib{WeiYe}{article}{
   author={Wei, Guofang},
   author={Ye, Rugang},
   title={A Neumann type maximum principle for the Laplace operator on
   compact Riemannian manifolds},
   journal={J. Geom. Anal.},
   volume={19},
   date={2009},
   number={3},
   pages={719--736},
   issn={1050-6926},
   review={\MR{2496575}},
   doi={10.1007/s12220-009-9080-0},
}

\bib{Yang90}{article}{
	author={Yang, Hong Cang},
	title={Estimates of the first eigenvalue for a compact Riemann manifold},
	journal={Sci. China Ser. A},
	volume={33},
	date={1990},
	number={1},
	pages={39--51},
	issn={1001-6511},
	review={\MR{1055558}},
}

	\bib{Zhang-Wang17}{article}{
	author={Zhang, Yuntao},
	author={Wang, Kui},
	title = {An alternative proof of lower bounds for the first eigenvalue on manifolds},
	journal={Math. Nachr.},
	VOLUME = {290},
	YEAR = {2017},
	NUMBER = {16},
	PAGES = {2708--2713},
	ISSN = {0025-584X},
	review={\MR{3722506}},
	review={Zbl 1379.35206},
	%       doi = {10.1002/mana.201600388},
}

\bib{ZhangZhu1}{article}{
   author={Zhang, Qi S.},
   author={Zhu, Meng},
   title={Li-Yau gradient bound for collapsing manifolds under integral
   curvature condition},
   journal={Proc. Amer. Math. Soc.},
   volume={145},
   date={2017},
   number={7},
   pages={3117--3126},
   issn={0002-9939},
   review={\MR{3637958}},
   doi={10.1090/proc/13418},
}
		
\bib{ZhangZhu2}{article}{
   author={Zhang, Qi S.},
   author={Zhu, Meng},
   title={Li-Yau gradient bounds on compact manifolds under nearly optimal
   curvature conditions},
   journal={J. Funct. Anal.},
   volume={275},
   date={2018},
   number={2},
   pages={478--515},
   issn={0022-1236},
   review={\MR{3802491}},
   doi={10.1016/j.jfa.2018.02.001},
}

\bib{ZhongYang}{article}{
   author={Zhong, Jia Qing},
   author={Yang, Hong Cang},
   title={On the estimate of the first eigenvalue of a compact Riemannian
   manifold},
   journal={Sci. Sinica Ser. A},
   volume={27},
   date={1984},
   number={12},
   pages={1265--1273},
   issn={0253-5831},
   review={\MR{794292}},
}

\end{biblist}
\end{bibdiv}

\end{document}